\documentclass[11pt,a4paper,reqno]{amsart}
\usepackage[english]{babel}
\usepackage{pst-grad} 
\usepackage{pst-plot} 
\usepackage{pstricks}
\usepackage{amsmath,amssymb,mathrsfs,amsthm, mathtools}
\usepackage{tikz} 
\usepackage{mathcomp,wasysym}  
\usepackage{graphicx}  
\usepackage[all]{xy} \xyoption{arc} \xyoption{color}
\usepackage{epsfig}
\usepackage{cite}
\usepackage[a4paper,
left=2.5cm, right=2.5cm,
top=3cm, bottom=3cm]{geometry}
\newcommand{\dx}{\textnormal{d}{x}}

\renewcommand{\div}{\textnormal{div}}
\newcommand{\aux}{\textnormal{aux}}
\newcommand{\NS}{Navier-Stokes}

\newcommand{\pare}[1]{\left( #1 \right)}

\newcommand{\av}[1]{\left| #1 \right|}
\newcommand{\bra}[1]{\left[ #1 \right]}
\newcommand{\set}[1]{\left\{ #1 \right\}}
\newcommand{\sgn}{\textnormal{sgn}}

\newcommand{\cH}{\mathcal{H}}
\newcommand{\cC}{\mathcal{C}}

\newcommand{\bR}{\mathbb{R}}
\newcommand{\bN}{\mathbb{N}}

\newcommand{\cG}{\mathcal{G}}

\newcommand{\cO}{\mathcal{O}}
\newcommand{\cU}{\mathcal{U}}
\newcommand{\cK}{\mathcal{K}}
\newcommand{\cN}{\mathcal{N}}
\newcommand{\cT}{\mathcal{T}}


\newcommand{\pat}{\partial_t}
\newcommand{\pax}{\partial_x}
\newcommand{\pa}{\partial}

\newcommand{\vertiii}[1]{{\left\vert\kern-0.25ex\left\vert\kern-0.25ex\left\vert #1 
    \right\vert\kern-0.25ex\right\vert\kern-0.25ex\right\vert}}

\normalsize
\normalsize
\setlength{\parindent}{0pt}


\usepackage[bookmarks,colorlinks]{hyperref}

\newtheorem{satz}{Proposition}[section]
 
\newtheorem{remark}[satz]{Remark}

\newtheorem{lemma}[satz]{Lemma}

\title{Asymptotic models for free boundary flow in porous media}

\author[R. Granero-Belinch\'{o}n]{Rafael Granero-Belinch\'{o}n}
\email{rafael.granero@unican.es}
\address{Departamento  de  Matem\'aticas,  Estad\'istica  y  Computaci\'on,  Universidad  de Cantabria.  Avda.  Los  Castros  s/n,  Santander,  Spain.}

\author[S. Scrobogna]{Stefano Scrobogna}
\email{sscrobogna@bcamath.org}
\address{Basque Center for Applied Mathematics, Mazarredo 14, 48009, Bilbao, Basque Country, Spain}


\begin{document}
\begin{abstract}
We provide rigorous asymptotic models for the free boundary Darcy and Forchheimer problem under the assumption of weak nonlinear interaction, in a regime in which the steepness parameter of the interface is considered to be very small. The models we derive capture the nonlinear interaction of the original free boundary Darcy and Forchheimer problem up to quadratic terms. Furthermore, we provide models that consider both the two-dimensional and three-dimensional cases, with and without bottom topography.
\end{abstract}

\keywords{Muskat problem, Darcy law, Forchheimer flow, moving interfaces, free-boundary problems}


\maketitle
{\small
\tableofcontents}

\allowdisplaybreaks
\section{Introduction}
Flow in porous media is important in many different applications ranging from oil production to catalytic converters. The simplest equation modeling flow in porous media is known as Darcy's law and reads 
\begin{equation}
\frac{\mu}{\kappa} u=-\nabla p-\rho G e_2,
\label{eq:Darcy}
\end{equation}
where $u$, $p$, $\rho$ and $\mu$ are the velocity, pressure, density and dynamic viscosity of the fluid, respectively. The constant $\kappa$ describes a property of the porous media and its known as the permeability. $G e_2$ stands for the acceleration due to gravity in the direction $(0,1)^\intercal $. Darcy law is valid for slow and viscous flows, and it was {first} derived experimentally by {Henry Darcy}  {in} 1856 and then derived theoretically from the \NS\ equations via homogenization (cf. \cite{Whitaker1986}). Darcy law is widely used in applications. In particular, the free boundary Darcy flow,  {also known as the Muskat problem} (cf. \cite{Muskat31, Muskat32, Muskat34}), appears as a model of geothermal reservoirs  \cite{CF}, aquifers or oil wells \cite{Muskat:porous-media}. Remarkably, the Muskat problem is mathematically analogous to the Hele-Shaw cell problem (see \cite{HS1898, ST58,CP93, cheng2012global}) that studies the movement of a fluid trapped between two parallel vertical plates, which are separated by a very narrow distance. Despite the Muskat problem has a long history in the physical literature, the rigorous mathematical analysis of the equation \eqref{eq:Darcy} with free boundary is relatively recent (we refer the interested reader to \cite{c-g07, CCGS13, GG, CGS16, CL18} and the references therein).\\


When the Reynolds number of the flow becomes larger, inertial terms should be added into the conservation of momentum equation. For these high velocity flows, Forchheimer \cite{Forchheimer14} noted that 
\begin{equation}\label{eq:forchheimer}
\beta\rho |u|u+\frac{\mu}{\kappa} u=-\nabla p-\rho G e_2,
\end{equation}
is a more accurate conservation of momentum equation. Here $\beta$ is known as the Forchheimer coefficient and the term $ \beta\rho |u|u $ amounts to inertial effects of the flow. \\

The scope of the present paper is to provide simplified models which approximate the evolution of the free boundary Darcy and free boundary Forchheimer problems under an assumption of weak nonlinearity (see equations \eqref{eqfapprox} and \eqref{eqf2Forch} below). We choose to consider hence a configuration in which the interface is not very steep. More explicitly, if we denote by $ H $ and $ L $ respectively the typical amplitude and wavelength of the interface and we consider the steepness parameter $ \sigma = H/L $, we suppose that $ 0<\sigma \ll 1 $.  Such configuration is rather common in geophysical fluid dynamics and it has been widely used in order to derive asymptotic expansions for the water wave problem (we refer the reader to the classical work of Stokes \cite{Stokes_1847} and to the more recent works \cite{AN10, AN12, AN14, NR05, NR06}). In such a setting we derive asymptotic models for the free boundary Darcy and the free boundary Forchhimer problems which capture the nonlinear interactions of \eqref{eq:Darcy} and \eqref{eq:forchheimer} up to quadratic terms. \\

In the first part of the paper, as a starting point, we consider the free boundary Darcy problem when the depth is infinite and the dimension of the interface is one. We observe that these assumptions on the dimension of the interface and the depth are not really necessary and will be removed below (see section \ref{sec:multidimensional_Darcy}). Starting with the Darcy equation in a moving domain (we refer the reader to the Cauchy problem \eqref{muskat} for a full presentation of the equations considered), we nondimensionalize the equation of motion redefining appropriate dimensionless unknowns and variables. Such nondimensionalization allows us to make appear explicitly the \textit{steepness parameter} $ \sigma = H/L $ in the equations of motion. We can next reformulate the problem, which is defined at the moment on a time-dependent domain $ \Omega\pare{t} $, on a fixed domain $ \Omega $; this is done through a diffeomorfic change of variables. Similar ideas were used previously in the study of nonlinear PDEs with moving domain. For instance, we refer to the works of Matsuno \cite{matsuno1992nonlinear, matsuno1993two, matsuno1993nonlinear}, Granero \& Shkoller \cite{GS}, Cheng, Granero, Shkoller \& Wilkening \cite{CGSW18}, Coutand \& Shkoller \cite{coutand2014finite, Coutand-Shkoller:well-posedness-free-surface-incompressible} and Lannes \cite{Lannes13} for the water waves and Rayleigh-Taylor instability problem. At this point we suppose that the ensemble of the unknowns of the problem, which we denote at the moment as $ \cU $ for the sake of brevity, can be expressed a series of powers of $ \sigma $, i.e.
\begin{equation}\label{eq:stokes_expansion}
\cU \pare{x, t} = \sum_{k\geqslant 0} \cU^{\pare{k}} \pare{x, t} \ \sigma^k. 
\end{equation}
At this point we can simply drop every $ \cO\pare{\sigma^3} $ term in the sequence of systems derived and what remains is the first- and second-order approximation of the Muskat problem in terms of the steepness parameter $ \sigma $. Next a technical result is proved (see Lemma \ref{lem:p1X}) which is inspired by the very recent work \cite[Lemma 1]{CGSW18} which allows us to express the approximation of the evolution of the Muskat problem as an evolution problem on the boundary. With this method we derive equation \eqref{eqf}. The first advantages of the technique introduced above is that it only requires elementary mathematical tools. Another advantage is that it can be easily adapted to also handle the case of Forchheimer flow.\\


Then we use the previous procedure to obtain a new asymptotic model for the Forchheimer equation \eqref{eq:forchheimer} with moving boundary when the depth is assumed to be infinite and the dimension of the interface is one. In this way we derive equation \eqref{eq:asymptotic_Forchheimer}.\\

Finally, in Sections \ref{sec:multidimensional_Darcy} and \ref{ref:asymDarcy3D} we extend our results for the free boundary Darcy problem and provide an asymptotic model for the free-surface Darcy flow in two and three space dimensions (\emph{i.e.} when the dimension of the interface is one or two) and with or without flat bottom. Although the previous method can be squezzed to handle bounded three dimensional fluid domains, we will use a different technique. We take advantage of the irrotationality of the flow in order to write the equations in terms of the velocity potential. Such potential solves an elliptic equation (see \eqref{eq:elliptic_equation_Phi}), hence it can be completely determined by its trace on the interface, which is a function of the elevation $ h $; in such a way we manage to write the evolution of $ h $ as
\begin{equation*}
\partial_t h = \cN\bra{h}, 
\end{equation*}
where $ \cN $ is a nonlinear function of $ h $. Next we expand $ \cN $ in terms of the steepness parameter and we obtain the asymptotic models \eqref{eq:multid} and \eqref{eq:multid3}. This is a very versatile method that requires a solid knowledge of elliptic theory and other mathematical tools such as the \textit{Dirichlet--Neumann operator} (cf. \cite[Chapter 3]{Lannes13,CL14, CL15}).

The rigorous mathematical analysis of the derived asymptotic equation \eqref{eqf} for the Muskat problem is performed in the forthcoming paper \cite{GBS_analysis}.

\subsection{Plan of the paper}
For the sake of clarity we first consider a fluid moving according to Darcy law when the depth is infinite and the flow is two-dimensional (one-dimensional interface). Then, in section \ref{ref:Darcy}, we introduce the Eulerian form of the problem along with its non-dimensionalization and its Arbitrary Lagrangian-Eulerian formulation. Later on, in section \ref{ref:asymDarcy}, we obtain the first of our asymptotic models for free boundary flow in porous media. Once we have introduced the main ideas of the paper in the simpler setting of Darcy law, we turn our attention to the more nonlinear Forchheimer flow in section \ref{ref:Forchheimer}. In this section we introduce the Eulerian formulation, the non-dimensionalization and the Arbitrary Lagrangian-Eulerian set of equations for Forchheimer flow. In Section \ref{sec:forchheimer_fixed domain} we derive our asymptotic model for the Forchheimer flow. Finally, in Sections \ref{sec:multidimensional_Darcy} and \ref{ref:asymDarcy3D} we provide a multidimensional asymptotic model for the Darcy flow with finite depth and a (possibly) flat bottom when the flow is three dimensional (two dimensional interface).

\subsection{Notations and conventions}

\subsubsection{Matrix indexing} Let $A$ be a matrix, and $b$ be a column vector. Then, we write $A^i_j$ for the component of $A$, located on row $i$ and column $j$;  consequently, using the Einstein summation convention, we write
$$
(Ab)^k=A^k_ib^i\text{ and }(A^Tb)^k=A^i_k b^i.
$$


\subsubsection{Derivatives}
We write
$$
\partial_j f =\frac{\partial f}{\partial x_j},\quad \partial_t f=\frac{\partial f}{\partial t }
$$
for the space derivative in the $j-$th direction and for a time derivative, respectively. When two spatial variables are considered, we write
$$
\nabla^\perp=\left(-\partial_2,\partial_1\right).
$$

\subsubsection{Fourier series and singular integral operators}   {Let $f(x_1)$ denote a $L^2$ function on $\mathbb{S}^1$ (identified with the interval $[-\pi,\pi]$ with periodic boundary conditions. Then it has the following Fourier representation
$f(x_1) = \sum\limits_{k=- \infty }^ \infty  \hat{f}(k) \ e^{ikx_1}$ for all $x_1 \in \mathbb{S}^1$, where 
$$
\hat f(k) = {\dfrac{1}{2\pi}} \int_{\mathbb{S}^1}{}\, f(x_1) \ 
e^{-ikx_1}dx_1.
$$} 
Using the Fourier representation, we define the
Hilbert transform $\mathcal{H}$ and the Calderon operator $ \Lambda $,  respectively,  as
\begin{align}\label{Hilbert}
\widehat{\mathcal{H}f}(k)=-i\text{sgn}(k) \hat{f}(k) \,, \ \ 
\widehat{\Lambda f}(k)&=|k|\hat{f}(k)\,.
\end{align}

\section{Two dimensional Darcy flow}\label{ref:Darcy}
\subsection{The fluid domain}\label{ref:domain}
The time-dependent two-dimensional infinitely deep fluid domain and free boundary are defined as
\begin{align}\label{Omegat}
\Omega(t) & = \set{ (x_1, x_2)\in\bR^2 \ \Big| \ {-L}\pi <x_1<{L}\pi\,, -\infty < x_2 < h(x_1,t)\,, \ t\in[0,T] }, \\
\label{Gammat}
\Gamma(t) & = \set{ \pare{x_1,h(x_1,t)}\in\bR^2 \ \Big|  {-L}\pi <x_1<{L}\pi\,,\ t\in[0,T] }
\end{align} 
with periodic boundary conditions in the horizontal variable $x_1$. We note that $L$ is related to the typical wavelength of the wave.
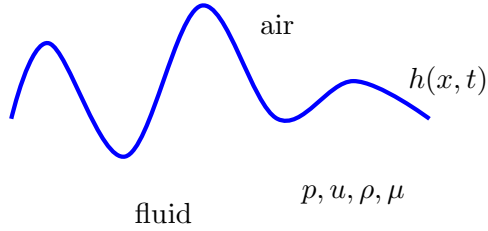
\begin{figure}[h]\label{fig1}
\begin{center}
\begin{tikzpicture}[scale=0.5]
    \draw (17,2.5) node { air};
    \draw (21.5,1.) node { $h(x,t)$}; 
    \draw (14,-2.5) node { fluid};
    \draw (19,-2) node { $p,u, \rho,\mu$}; 
    \draw[color=blue,ultra thick] plot[smooth,tension=.6] coordinates{( 10,0) (11,2) (13,-1) (15, 3) (17, 0) (19, 1) (21,0) };
    \end{tikzpicture} 
\end{center}
\caption{The fluid-air interface $h(x,t)$.}
\end{figure}
We define the reference domain $\Omega$ and reference interface $\Gamma$ as
\begin{align}\label{Omega}
\Omega = \mathbb{S}^1 \times (-\infty, 0) \,, &&
\Gamma = \mathbb{S}^1 \times \{0\} \,.
\end{align} 
We let $N =e_2$ denote the outward unit normal to $\Omega$ at $\Gamma$, and we let $\tau(x_1 ,t)$ and $n( x_1 , t)$ denote, respectively, the unit tangent and (outward) normal vectors to $\Gamma(t)$
\begin{align*}
\tau=\frac{(1,\partial_1 h)}{\sqrt{1+(\partial_1 h)^2}}, && n=\frac{(-\partial_1 h,1)}{\sqrt{1+(\partial_1 h)^2}}.
\end{align*}
The induced metric for $\Gamma(t)$ is given by
\begin{equation}\label{metric}
g =  1+ (\partial_1 h)^2\,.
\end{equation}

\subsection{The equations in the Eulerian formulation}
Slow, viscous flow in two-dimensional porous media can be modelled with the following set of equations (known also as the one-phase Muskat problem):
\begin{subequations}\label{muskat}
\begin{alignat}{2}
\frac{\mu}{\kappa} u+\nabla p&=-\rho G e_2,  \qquad&&\text{in}\quad \Omega(t)\times[0,T]\,,\\
\nabla\cdot u &=0,  \qquad&&\text{in}\quad \Omega(t)\times[0,T]\,,\\
p &= -\gamma \mathcal{K}_{\Gamma(t)}\qquad &&\text{on }\Gamma(t)\times[0,T],\\
\pat h &= u\cdot(-\pax h,1)\qquad &&\text{on }\Gamma(t)\times[0,T],
\end{alignat}
\end{subequations}
where $u$ (units of $length/time$) and $p$ (units of $mass/time^2$) are the velocity and pressure of the fluid. The constants $\mu$ (units of $mass/(length\cdot time)$) and $\rho$ (units of $mass/length^2$) denote the dynamic viscosity and density of the fluid. The constants $\kappa$ (units of $length$) and $G$ (units of $length/time^2$) denote the permeability of the porous media and the gravity, respectively. Moreover,$\gamma$ is the surface tension coefficient (units of $mass\cdot length/time^2$) at the interface, while $\mathcal{K}_{\Gamma(t)}$ denotes the curvature of the interface
$$
\mathcal{K}_{\Gamma(t)}=\frac{\partial_1^2 h}{\left(1+\pare{ \partial_1 h}^2\right)^{3/2}}.
$$

The system (\ref{muskat}) is supplemented with an initial condition for $h$:
\begin{equation}\label{eq:initial}
h(0,x)=h_0(x)
\end{equation}

Instead of using the formulation in terms of the Eulerian velocity and pressure, \eqref{muskat} can be formulated in terms of the stream function and the tangential velocity (see \cite{CGSW18} for the analog situation for water waves). Indeed, define the tangential velocity (or vorticity strength)
$$
\omega = -u \cdot \tau    \text{ on } \ \Gamma(t) \,,
$$
and
$$
\nabla^\perp\psi = u \text{ in } \ \Omega(t) \,.
$$
Then, we observe that
$$
\omega= -\nabla^\perp \psi\cdot \tau=\nabla \psi\cdot n \text{ on } \ \Gamma(t),
$$
$$
\partial_t h= \nabla^\perp \psi\cdot n=\nabla \psi\cdot \tau=\partial_1\left(\psi|_{\Gamma(t)}\right) \text{ on } \ \Gamma(t),
$$
We also compute that, 
\begin{align*}
 {\frac{\mu}{\kappa}}\sqrt{g}\omega&=-{\frac{\mu}{\kappa}} u\cdot \sqrt{g}\tau\\
&=\nabla p\bigg{|}_{\Gamma(t)}\cdot\sqrt{g}\tau +\rho G \partial_1 h\\
&=\partial_1\left( p|_{\Gamma(t)}\right) +\rho G \partial_1 h\\
&=\partial_1 \left(-\gamma\frac{\partial_1^2 h}{\left(1+(\partial_1 h)^2\right)^{3/2}}\right) +\rho G \partial_1 h
\end{align*}

Then, we have that \eqref{muskat} is equivalent to
\begin{subequations}\label{muskat2}
\begin{alignat}{2}
\Delta \psi&=0,  \qquad&&\text{in}\quad \Omega(t)\times[0,T]\,,\\
\nabla \psi \cdot n&=\frac{\kappa}{\mu\sqrt{g}}\left(\partial_1 \left(-\gamma\frac{\partial_1^2 h}{\left(1+(\partial_1 h)^2\right)^{3/2}}\right) +\rho G \partial_1 h\right),  \qquad&&\text{on}\quad \Gamma(t)\times[0,T]\,,\\
\pat h &= \partial_1 \psi \pare{\big. x_1,h\pare{ x_1,t},t}\qquad &&\text{on }\Gamma(t)\times[0,T],
\end{alignat}
\end{subequations}

\subsection{Nondimensional Eulerian formulation} \label{sec:nondim_Darcy}
We denote by $H$ and $L$ the typical amplitude and wavelength of the interfaces in a porous medium. We change to dimensionless variables (denoted with $\tilde{\cdot}$)
\begin{align}\label{dimensionless1}
x=L \ \tilde{x}, && t=\frac{\mu L}{\rho \kappa G} \ \tilde{t},
\end{align}
and unknowns
\begin{align}\label{dimensionless2}
h(x_1,t)=H \ \tilde{h}(\tilde{x}_1,\tilde{t}), && \psi(x_1,x_2,t)=\frac{L \kappa \rho G }{\mu} \ \frac{H}{L}\  \tilde{\psi}(\tilde{x}_1,\tilde{x}_2,\tilde{t}).
\end{align}
Then,
\begin{align*}
\partial^j_{x_1} h(x_1,t)& = \frac{H}{L^j}\partial^j_{\tilde{x}_1} \tilde{h}(\tilde{x}_1,\tilde{t}), & j\in\bN,\\
\nabla_{x} \psi(x_1,x_2,t)& = \frac{\kappa \rho G }{\mu}\ \frac{H}{L}\ \nabla_{\tilde{x}} \tilde{\psi}(\tilde{x}_1,\tilde{x}_2,\tilde{t})
\end{align*}

\begin{alignat*}{2}
\Delta_{\tilde{x}} \tilde{\psi}&=0,  \qquad&&\text{in}\quad \widetilde{\Omega}(t)\times[0,T]\,,\\
\nabla_{\tilde{x}_1} \tilde{\psi} \cdot {\left(-\frac{H}{L}\partial_{\tilde{x}_1} \tilde{h},1\right)}
&=\partial_{\tilde{x}_1} \left(-\frac{\gamma}{\rho  GL^2}\frac{\partial_{\tilde{x}_1}^2 \tilde{h}}{\left(1+\left(\frac{H}{L}\partial_{\tilde{x}_1} \tilde{h}\right)^2\right)^{3/2}}\right) +  \partial_{\tilde{x}_1} \tilde{h},  \qquad&&\text{on}\quad \widetilde{\Gamma}(t)\times[0,T]\,,\\
\partial_{\tilde{t}} \tilde{h} &= \partial_{\tilde{x}_1} \tilde{\psi} \left(\tilde{x}_1,\frac{H}{L}\tilde{h}(\tilde{x}_1,\tilde{t}),\tilde{t}\right)\qquad &&\text{on}\quad\widetilde{\Gamma}(t)\times[0,T],
\end{alignat*}
with the non-dimensionalized fluid domain
\begin{align*}
\widetilde{\Omega}(t) & =\set{ \pare{ \tilde{x}_1, \tilde{x}_2} \ \left| \ -\pi<\tilde{x}_1< \pi\,, -\infty < \tilde{x}_2 < \frac{H}{L}\tilde{h}(\tilde{x}_1,t)\,, \ t\in[0,T]\right.  }, \\
\widetilde{\Gamma}(t) & =\set{ \pare{ \tilde{x}_1, \frac{H}{L}\tilde{h}(\tilde{x}_1,t)}\,, \ t\in[0,T] }
\end{align*}
Based on our nondimensionalization of the equations, we find two dimensionless quantities of interest:
\begin{align}\label{eq:dimensionless_numbers}
\sigma=\frac{H}{L}, && \nu=\frac{\gamma}{L^2\rho G}.
\end{align}
{The Bond number} $\nu$ is a parameter that measures the ratio between the gravitational forces $L^2\rho G $ and the capillarity forces $\gamma $ and {the \emph{steepness parameter}} $\sigma$ measures the ratio between the amplitude and the wavelength of the wave. 

Dropping the tildes for the sake of clarity, we have the following dimensionless form of the Muskat problem
\begin{subequations}\label{muskat2v2}
\begin{alignat}{2}
\Delta \psi&=0,  \qquad&&\text{in}\quad \Omega(t)\times[0,T]\,,\\
\nabla \psi \cdot {\left(-\sigma\partial_1 h,1\right)}
&= {\partial_1 \left(-\nu\frac{ \partial_1^2 h}{\left(1+\left(\sigma \partial_1 h\right)^2\right)^{3/2}}\right) + \partial_1 h},  \qquad&&\text{on}\quad \Gamma(t)\times[0,T]\,,\\
\pat h &= \partial_1 \psi \left(x_1,\sigma h(x_1,t),t\right)\qquad &&\text{on}\quad \Gamma(t)\times[0,T],
\end{alignat}
\end{subequations}

\subsection{The equations in the Arbitrary Lagrangian-Eulerian formulation}\label{sec:muskat_fixed domain}
We define the time-dependent diffeomorphism
\begin{equation}\label{eq:diff_Psi}
\begin{aligned}
&\Psi : && \Omega && \rightarrow && \Omega\pare{t}\\
& && \pare{x_1, x_2} && \mapsto &&\Psi(x_1,x_2,t) = \pare{\Psi_1(x_1,x_2,t), \Psi_2(x_1,x_2,t)}=(x_1,x_2+\sigma h(x_1,t)).
\end{aligned}
\end{equation}
This diffeomorphism maps the reference domain $\Omega$ onto the moving domain $\Omega(t)$. We have hence
\begin{align}\label{eq:matrix_A}
\nabla\Psi=\left(\begin{array}{cc}1 & 0\\ \sigma\partial_1 h(x_1,t)& 1\end{array}\right), &&
A=(\nabla \Psi)^{-1}=\left(\begin{array}{cc}1 & 0\\ -\sigma\partial_1 h(x_1,t)& 1\end{array}\right).
\end{align}
With such back-to-label map defined we can now define the following new unknowns;
\begin{align}\label{eq:vort_and_stream_in_fixed_domain}
\varpi=\omega\circ\Psi, && \varphi=\psi\circ\Psi,
\end{align}
which are now defined on the fixed domain $ \Omega\times\bR^+ $. \\

Let us now remark that given any $ f\in\cC^1\pare{\Omega\pare{t}} $ the function $ f\circ \Psi \in\cC^1\pare{\Omega} $, and moreover
\begin{align*}
\partial_j \bra{f\pare{\Psi}} = \partial_k f\pare{\Psi} \  \partial_j \Psi_k &&\Longrightarrow && \nabla \bra{f\circ\Psi}=\nabla\Psi^\intercal \ \nabla f\circ\Psi, 
\end{align*}
from which we deduce that 
\begin{align}\label{eq:rule_derivation_fixed_domain}
\nabla f \circ\Psi = A^\intercal \  \nabla \bra{f\circ\Psi} &&\Longrightarrow &&   \partial_i f\circ\Psi=A^k_i \partial_k \bra{f\circ\Psi}. 
\end{align}
Similarly, we observe that
$$
\div \;v\circ\Psi= A^k_j\partial_k(v\cdot e_j)
$$
It is now easy to deduce the equation satisfied by $ \varphi = \psi\circ\Psi $ in $ \Omega\times\bR $, in fact
\begin{equation}
\label{eq:laplace_fixed_domain}
\begin{aligned}
0 & = \Delta \psi \pare{\Psi}\\
& =\div \ \nabla \psi\pare{\Psi} \\
& = A^i_j\partial_i \pare{A^k_j \partial_k \varphi}\\
& = \partial_i\pare {A^i_j A^k_j \partial_k \varphi}. 
\end{aligned}
\end{equation}


In these new variables, and using $\sqrt{g} n_i=A^2_i=A^j_i N^j$, \eqref{muskat2} reads
\begin{subequations}\label{muskatALE}
\begin{alignat}{2}
{\partial_i\pare {A^i_j A^k_j \partial_k \varphi}}&=0,  \qquad&&\text{in}\quad \Omega\times[0,T]\,,\\
{A^k_j \partial_k\varphi \ A^i_j N^i }&=  { \partial_1 \left(-\nu\frac{ \partial_1^2 h}{\left(1+\pare{ \sigma\partial_1 h}^2\right)^{3/2}}\right) +  \partial_1 h},  \qquad&&\text{on}\quad \Gamma\times[0,T]\,,\\
\pat h &= \partial_1 \varphi \qquad &&\text{on}\quad\Gamma\times[0,T].
\end{alignat}
\end{subequations}

\section{The asymptotic model fortwo dimensional Darcy flow} \label{ref:asymDarcy}

A straightforward computation shows, with the help of \eqref{eq:matrix_A};
\begin{equation*}
\begin{aligned}
{\partial_i\pare {A^i_j A^k_j \partial_k \varphi}}& = \Delta \varphi - \sigma\pare{\partial_1^2 h \ \partial_2 \varphi + {2}\partial_1 h \ \partial_{12}\varphi}+ {\sigma^2(\partial_1 h)^2\partial^2_2\varphi}. 
\end{aligned}
\end{equation*}

{Similarly, using the relation \eqref{eq:rule_derivation_fixed_domain}}, we can compute
\begin{align*}
\nabla\psi\pare{\Psi}\cdot \sqrt{g} n & = {A^k_j \partial_k\varphi \ A^i_j N^i} , \\
& =  \pare{\Big. {-\sigma \partial_1 h \partial_1\varphi +(1+\sigma^2(\partial_1 h)^2)\partial_2 \varphi}}
\end{align*}

Expanding \eqref{muskatALE}, we find that
\begin{alignat*}{2}
\Delta \varphi &={\sigma\pare{\partial_1^2 h \ \partial_2 \varphi + 2\partial_1 h \ \partial_{12}\varphi}-\sigma^2(\partial_1 h)^2\partial^2_2\varphi}   \qquad&&\text{in}\quad \Omega\times[0,T]\,,\\
 {\Big. \partial_2 \varphi}&= {\sigma \partial_1 h \partial_1\varphi -\sigma^2(\partial_1 h)^2\partial_2 \varphi}-\partial_1 \left(\nu\frac{ \partial_1^2 h}{\left(1+\pare{ \sigma\partial_1 h}^2\right)^{3/2}}\right) +  \partial_1 h,  \qquad&&\text{on}\quad \Gamma\times[0,T]\,,\\
\pat h &= \partial_1 \varphi \qquad &&\text{on}\quad\Gamma\times[0,T],
\end{alignat*}
Further computing the surface tension term we {obtain} that
\begin{align*}
\partial_1 \left(\nu\frac{ \partial_1^2 h}{\left(1+(\sigma\partial_1 h)^2\right)^{3/2}}\right)&=\nu\frac{ \partial_1^3 h}{\left(1+(\sigma\partial_1 h)^2\right)^{3/2}}-3\frac{\nu\sigma^2 \partial_1^2 h}{\left(1+(\sigma\partial_1 h)^2\right)^{5/2}}\partial_1 h\partial_1^2 h.
\end{align*}
{As a consequence, we have to study} the following system:
\begin{subequations}\label{muskatALE2}
\begin{alignat}{2}
\Delta \varphi &= {\sigma\pare{\partial_1^2 h \ \partial_2 \varphi + 2\partial_1 h \ \partial_{12}\varphi}-\sigma^2(\partial_1 h)^2\partial^2_2\varphi}  \;&&\text{in } \Omega\times[0,T]\,,\\
 \partial_2 \varphi &= {\sigma \partial_1 h \partial_1\varphi -\sigma^2(\partial_1 h)^2\partial_2 \varphi} -\frac{\nu\ \partial_1^3 h}{\left(1+\pare{\sigma\partial_1 h}^{2}\right)^{3/2}}\nonumber\\
 &\quad+3\frac{\nu\sigma^2 (\partial_1^2 h)^2 \partial_1 h}{\left(1+\pare{\sigma\partial_1 h}^{2}\right)^{5/2}} +  \partial_1 h,  \;&&\text{on }\Gamma\times[0,T]\,,\label{muskatALE2b}\\
\pat h &= \partial_1 \varphi \; &&\text{on }\Gamma\times[0,T],
\end{alignat}
\end{subequations}

We introduce the following ansatz
\begin{align}\label{ansatz}
h(x_1,t)=\sum_{n=0}^\infty \sigma^{n}h^{(n)}(x_1,t), &&
 \varphi(x_1,x_2,t)=\sum_{n=0}^\infty \sigma^{n}\varphi^{(n)}(x_1,x_2,t).
\end{align}

Moreover since 
\begin{align*}
\frac{1}{\pare{ 1+x^2}^{3/2}} = 1+\cO\pare{x^2}, &&
\frac{1}{\pare{1+x^2}^{5/2}} = 1 +\cO\pare{x^2}, 
\end{align*}
we can rewrite \eqref{muskatALE2b} as
\begin{equation*}
\partial_2 \varphi = \partial_1 \pare{ h-\nu \partial_1^2 h} +  {  \sigma \  \partial_1 h \partial_1\varphi} + \cO\pare{\sigma^2}. 
\end{equation*}

We observe that \eqref{muskatALE2} can be written as
\begin{alignat*}{2}
\Delta \varphi &= \sigma\pare{\partial_1^2 h \ \partial_2 \varphi +  {2}\partial_1 h \ \partial_{12}\varphi} {+ \cO\pare{\sigma^2}}     \qquad&&\text{in}\quad \Omega\times[0,T]\,,\\
 \partial_2 \varphi &= \partial_1 \pare{ h-\nu \partial_1^2 h} +  {\sigma \  \partial_1 h \partial_1\varphi} + \cO\pare{\sigma^2}
 ,  \qquad&&\text{on}\quad \Gamma\times[0,T]\,,\\
\pat h &= \partial_1 \varphi \qquad &&\text{on}\quad\Gamma\times[0,T],
\end{alignat*}
where $\mathcal{O}(\sigma^2)$ denotes terms of order $\sigma^2$ and higher. We are interested in finding an asymptotic model of the free boundary Darcy flow with an error $\mathcal{O}(\sigma^2)$. As a consequence, we can neglect terms of $O(\sigma^2)$ in \eqref{muskatALE2}. 
Thus, up to $\mathcal{O}(\sigma^2)$, \eqref{muskatALE2} is equivalent to
\begin{subequations}\label{muskatALE3}
\begin{alignat}{2}
\Delta \varphi &= \sigma\pare{\partial_1^2 h \ \partial_2 \varphi +  {2}\partial_1 h \ \partial_{12}\varphi}      \qquad&&\text{in}\quad \Omega\times[0,T]\,,\\
 \partial_2\varphi&=\partial_1 \pare{ h-\nu \partial_1^2 h} +  { \sigma \  \partial_1 h \partial_1\varphi} ,  \qquad&&\text{on}\quad \Gamma\times[0,T]\,,\\
\pat h &= \partial_1 \varphi \qquad &&\text{on}\quad\Gamma\times[0,T],
\end{alignat}
\end{subequations}

In order a function satisfying the ansatz \eqref{ansatz} could be a solution of \eqref{muskatALE3}, we have that each term in the asymptotic expansion has to be defined as the solution of
\begin{subequations}\label{muskatrecursion}
\begin{align}
\Delta \varphi^{(n)} &=  \sum_{j=0}^{n-1}\partial_1^2 h^{(j)} \partial_{2}\varphi ^{(n-1-j)}+ {2}\sum_{j=0}^{n-1}\partial_{1}h^{(j)} \partial_{12}\varphi^{(n-1-j)}
\text{ on }\quad \Omega\times[0,T],\\
 \partial_2\varphi^{(n)}&= \partial_1 \pare{h^{\pare{n}} - \nu\partial_1^2 h^{\pare{n}}} {+}\sum_{j=0}^{n-1} \partial_1 h^{\pare{j}} \ \partial_1 \varphi^{\pare{n-1-j}}\text{ on }\quad \Gamma\times[0,T],\\
\pat h^{(n)} &= \partial_1 \varphi^{(n)} \text{ on }\quad \Gamma\times[0,T].
\end{align}
\end{subequations}

The initial data can be assigned as 
\begin{subequations}\label{initial}
\begin{align}
h^{(0)}(x_1,0) &= h(x_1,0),\\
h^{(k)}(x_1,0) &= 0\;\forall\,k\,\geq1.
\end{align}
\end{subequations}

In particular, the terms $h^{(j)},\varphi^{(j)}$ for $j=0$ and $1$ solve

\begin{subequations}\label{case0}
\begin{align}
\Delta \varphi^{(0)} &= 0, \text{ on }\quad \Omega\times[0,T]\\
\partial_2\varphi^{(0)}&=\partial_1 \pare{h^{\pare{0}} - \nu\partial_1^2 h^{\pare{0}}} \text{ on }\quad \Gamma\times[0,T],\\
\pat h^{(0)} &= \partial_1 \varphi^{(0)}\text{ on }\quad \Gamma\times[0,T],
\end{align}
\end{subequations}
and
\begin{subequations}\label{case1}
\begin{align}
\Delta \varphi^{(1)} &=  \partial_1^2 h^{(0)} \partial_{ {2}}\varphi ^{(0)}+ 2\partial_{1}h^{(0)} \partial_{12}\varphi^{(0)} \text{ on }\quad \Omega\times[0,T]\\
\partial_2\varphi^{(1)}&=\partial_1 \pare{h^{\pare{1}} - \nu\partial_1^2 h^{\pare{1}}}  {+} \partial_1 h^{\pare{0}} \partial_1 \varphi^{\pare{0}}\text{ on }\quad \Gamma\times[0,T],\\
\pat h^{(1)} &= \partial_1 \varphi^{(1)}\text{ on }\quad \Gamma\times[0,T].
\end{align}
\end{subequations}

We observe that the solvability conditions are satisfied for both elliptic problems. Then, the explicit solution to \eqref{case0} can be computed using Lemma \ref{lem:solutions_Poisson}
\begin{equation}\label{phi0}
\varphi^{(0)}(x_1,x_2,t)=\sum_{k\in\mathbb{Z}}\frac{1}{|k|}\left((-\nu i^3k^3+ik) \widehat{h^{(0)}}(k,t)\right)e^{ix_1 k+|k|x_2}.
\end{equation}
 Then
\begin{align*}
\partial_1\varphi^{(0)}(x_1,0,t)&=\sum_{k\in\mathbb{Z}}\frac{ik}{|k|}\left((-\nu i^3k^3+ik) \widehat{h^{(0)}}(k,t)\right)e^{ix_1 k}\\
&=-\mathcal{H}\left(-\nu \partial_1^3 h^{(0)}+\partial_1 h^{(0)}\right)\\
&=-\nu \Lambda^3 h^{(0)}-\Lambda h^{(0)}.
\end{align*}
Then, we have that $h^{(0)}$ solves the following linear problem
\begin{equation}\label{h0}
\pat h^{(0)}=-\nu \Lambda^3 h^{(0)}-\Lambda h^{(0)}.
\end{equation}
We split $\varphi^{(1)}=\varphi^{(1)}_a+\varphi^{(1)}_b$, where
\begin{subequations}\label{case1a}
\begin{align}
\Delta \varphi^{(1)}_a &=  0\text{ on }\quad \Omega\times[0,T]\\
\partial_2\varphi^{(1)}_a&=\partial_1 \pare{h^{\pare{1}} - \nu\partial_1^2 h^{\pare{1}}}\text{ on }\quad \Gamma\times[0,T],
\end{align}
\end{subequations}
and
\begin{subequations}\label{case1b}
\begin{align}
\Delta \varphi^{(1)}_b &=  \partial_1^2 h^{(0)} \partial_{{2}}\varphi ^{(0)}+ 2\partial_{1}h^{(0)} \partial_{12}\varphi^{(0)}\text{ on }\quad \Omega\times[0,T]\\
\partial_2\varphi^{(1)}_b&= \partial_1 h^{\pare{0}} \partial_1 \varphi^{\pare{0}}\text{ on }\quad \Gamma\times[0,T].
\end{align}
\end{subequations}
We recall the following Lemma,
\begin{lemma}[\cite{CGSW18}]\label{lem:p1X}
Let $h:\mathbb{S}^1\to\mathbb{R}$ and $\varphi:\Omega\to \mathbb{R}$ {denote $2\pi$-periodic functions of $x_1$, such that
$$
h(x_1) = \sum_{k\in\mathbb{Z}, k\ne 0} \widehat{h}_k e^{ikx_1}\,,\quad \varphi(x_1,x_2) =
\sum_{k,m\in\mathbb{Z}} \widehat{P}_{k,m}(x_2) e^{ikx_1 + |m|x_2} \,,
$$}
where $x_2 \mapsto \widehat{P}_{k,m}(x_2)$ is a polynomial function. If $X$ is the unique solution to \begin{equation}\label{elliptic1}
\Delta X = \partial_{2}\big[2 (\partial_1 h) (\partial_1 \varphi) + (\partial_1^2 h) \varphi \big] \ \text{ in } \ \Omega \,, \ \text{ and } \
\partial_2 X = (\partial_1 h) (\partial_1 \varphi) \ \text{on } \ {\mathbb{S}^1}\,,
\end{equation}then
\begin{equation}\label{p1X}
(\partial_1 X)(x_1,0) =  - H \big[(\partial_1 h)(\partial_1 \varphi)\big] - \sum_{k,\ell,m\in \mathbb{Z}} i\text{\rm sgn}(k) |m| (\ell^2 - k^2) \widehat{h}_{k-\ell} \sum_{j=0}^\infty \frac{(-1)^j \widehat{P}^{(j)}_{\ell,m}(0)}{(|m|+|k|)^{j+1}} e^{ikx_1},
\end{equation}
where $\widehat{P}^{(j)}_{\ell,m}(0)$ denotes $\partial_2^j\widehat{P}_{\ell,m}(x_2)$  evaluated at $x_2=0$.
Moreover,
if $\varphi$ is harmonic in $\Omega$ so that $\varphi(x_1,x_2) = \sum\limits_{k\in\mathbb{Z}} \widehat{\varphi}_k e^{ikx_1 + |k|x_2}$, then
    \begin{equation}\label{p1X2}
    \partial_1 X = - \Lambda [h \partial_1\varphi] + \partial_1 (h\Lambda \varphi) = \partial_1 \big([h,H] \partial_1\varphi\big)\quad\text{on }\ {\mathbb{S}^1}\,,
    \end{equation}
where {$[h,\mathcal{H}]f=h\mathcal{H}f-\mathcal{H}(hf)$} denotes the commutator.
\end{lemma}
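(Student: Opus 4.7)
The plan is to expand everything in Fourier series along $x_1$, reduce the Neumann problem for $X$ to a family of ODEs in $x_2$ (one per horizontal Fourier mode $k$), and solve these ODEs explicitly through a one-dimensional Green's function. The polynomial-times-exponential structure of $\varphi$ in $x_2$ will make the resulting integrals computable in closed form by iterated integration by parts, and this is precisely where the series $\sum_{j\geq 0}(-1)^j\widehat{P}^{(j)}_{\ell,m}(0)/(|m|+|k|)^{j+1}$ in \eqref{p1X} comes from.

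First I would compute the Fourier decomposition of the source $F := 2(\partial_1 h)(\partial_1 \varphi) + (\partial_1^2 h)\varphi$. Pairing the mode $k-\ell$ of $h$ with the mode $(\ell,m)$ of $\varphi$ and using the elementary identity
\begin{equation*}
-2\ell(k-\ell) - (k-\ell)^2 = (k-\ell)(-k-\ell) = \ell^2 - k^2,
\end{equation*}
one obtains $\widehat{F}_k(x_2) = \sum_{\ell,m}(\ell^2-k^2)\widehat{h}_{k-\ell}\widehat{P}_{\ell,m}(x_2)e^{|m|x_2}$, which already explains the factor $(\ell^2-k^2)$ in \eqref{p1X}. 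Writing $X = \sum_k \widehat{X}_k(x_2)e^{ikx_1}$, the Poisson equation becomes the ODE $\widehat{X}_k'' - k^2\widehat{X}_k = \partial_{x_2}\widehat{F}_k$ with Neumann condition $\widehat{X}_k'(0)=\widehat{g}_k$, where $g := (\partial_1 h)(\partial_1 \varphi)\big|_{x_2=0}$. I would then construct the Green's function $G_k(x_2,y_2)$ for $\partial_{x_2}^2 - k^2$ on $(-\infty,0)$ with Neumann condition at $0$ and decay at $-\infty$ (a standard Sturm--Liouville computation yielding an explicit expression built from $e^{\pm|k|x_2}$, in particular $\widehat{G}_k(0,0) = -1/|k|$). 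After multiplying the $k$-th mode by $ik$ and taking the trace at $x_2=0$, the boundary contribution produces exactly $-\mathcal{H}[(\partial_1 h)(\partial_1\varphi)]$ via the identity $\widehat{\mathcal{H}f}(k) = -i\sgn(k)\widehat{f}(k)$.

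The volume contribution, after one integration by parts to move $\partial_{y_2}$ off $\widehat{F}_k$, reduces to evaluating the one-dimensional integral
\begin{equation*}
\int_{-\infty}^0 \widehat{P}_{\ell,m}(y_2)\, e^{(|m|+|k|)y_2}\, dy_2 = \sum_{j=0}^\infty \frac{(-1)^j\widehat{P}^{(j)}_{\ell,m}(0)}{(|m|+|k|)^{j+1}},
\end{equation*}
which is obtained by repeated integration by parts and terminates since $\widehat{P}_{\ell,m}$ is polynomial in $x_2$. Combining the prefactors $ik$, $-i\sgn(k)$, $|m|$ and $(\ell^2-k^2)\widehat{h}_{k-\ell}$ yields the main sum in \eqref{p1X}. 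The harmonic case \eqref{p1X2} is then an easy specialization: when $\varphi(x_1,x_2) = \sum_k \widehat{\varphi}_k e^{ikx_1+|k|x_2}$, each polynomial degenerates to a constant, $\widehat{P}_{\ell,m}(x_2) = \widehat{\varphi}_\ell\,\delta_{m,\ell}$, so only $j=0$ survives and $|m|=|\ell|$. The algebraic simplification $(\ell^2-k^2)/(|\ell|+|k|) = |\ell|-|k|$ collapses everything, and a short Fourier computation identifies the result with $-\Lambda[h\partial_1\varphi]+\partial_1(h\Lambda\varphi) = \partial_1([h,\mathcal{H}]\partial_1\varphi)$, using $\Lambda = \mathcal{H}\partial_1$ and the Leibniz rule.

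The main technical obstacle I expect is the triple-index bookkeeping and the justification of the interchange of the sums in $k$, $\ell$, $m$ with the integral over $y_2$. The saving grace is that the polynomial nature of each $\widehat{P}_{\ell,m}$ makes the inner series in $j$ finite, while the strict exponential decay $e^{|m|y_2}$ as $y_2 \to -\infty$ (modulated only by a polynomial) makes every integral absolutely convergent, so the formal Fourier manipulations can be carried out rigorously mode by mode.
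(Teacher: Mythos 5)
Your overall strategy --- Fourier decomposition in $x_1$, reduction to a Neumann ODE in $x_2$ for each horizontal mode $k$, an explicit Green's function with $G_k(0,0)=-1/|k|$, and iterated integration by parts exploiting the polynomial-times-exponential structure --- is the natural one, and it is presumably that of the source: note that the paper itself does not prove this lemma (it is quoted from \cite{CGSW18}) and only ever invokes the harmonic case \eqref{p1X2}. Your computation of $\widehat{F}_k$, the identity $-2\ell(k-\ell)-(k-\ell)^2=\ell^2-k^2$, the boundary contribution $-\mathcal{H}\big[(\partial_1 h)(\partial_1\varphi)\big]$, and the evaluation $\int_{-\infty}^0\widehat{P}_{\ell,m}(y_2)e^{\alpha y_2}\,dy_2=\sum_{j\ge0}(-1)^j\widehat{P}^{(j)}_{\ell,m}(0)/\alpha^{j+1}$ with $\alpha=|m|+|k|$ are all correct.

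The gap is in the sentence ``combining the prefactors $ik$, $-i\sgn(k)$, $|m|$ and $(\ell^2-k^2)\widehat h_{k-\ell}$ yields the main sum'': that is exactly the step that does not close as described. Integrating by parts to move $\partial_{y_2}$ off $\widehat F_k$ produces the boundary term $G_k(0,0)\widehat F_k(0)=-\widehat F_k(0)/|k|$, which you do not mention, so the trace of the volume part is $-\widehat F_k(0)/|k|+\int_{-\infty}^0 e^{|k|y_2}\widehat F_k(y_2)\,dy_2$. After multiplying by $ik$, the contribution of a single mode $(\ell,m)$ is $(\ell^2-k^2)\widehat h_{k-\ell}\bigl[-i\sgn(k)\widehat P_{\ell,m}(0)+ik\sum_{j}(-1)^j\widehat P^{(j)}_{\ell,m}(0)\alpha^{-j-1}\bigr]$; writing $\widehat P_{\ell,m}(0)=\alpha\sum_{j}(-1)^j\widehat P^{(j)}_{\ell,m}(0)\alpha^{-j-1}-\sum_{j\ge1}(-1)^j\widehat P^{(j)}_{\ell,m}(0)\alpha^{-j}$ and using $-\sgn(k)\alpha+k=-\sgn(k)|m|$, the claimed prefactor $-i\sgn(k)|m|$ emerges only up to a residue $i\sgn(k)\sum_{j\ge1}(-1)^j\widehat P^{(j)}_{\ell,m}(0)\alpha^{-j}$, which vanishes precisely when $\widehat P_{\ell,m}$ is constant. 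This is not cosmetic: for $h=e^{ix_1}$ and $\varphi=x_2e^{ix_1+x_2}$ one solves the mode-$2$ ODE directly and finds
\begin{equation*}
\widehat X_2(x_2)=\Bigl(x_2+\tfrac{5}{3}\Bigr)e^{x_2}-\tfrac{4}{3}e^{2x_2},\qquad \partial_1X(x_1,0)=\tfrac{2i}{3}\,e^{2ix_1},
\end{equation*}
whereas the right-hand side of \eqref{p1X} evaluates to $-\tfrac{i}{3}e^{2ix_1}$. So your sketch, carried out carefully, yields \eqref{p1X} plus the additional residue term rather than \eqref{p1X} verbatim; you must either track that term explicitly or restrict the statement to constant $\widehat P_{\ell,m}$. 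The harmonic specialization \eqref{p1X2}, where every $\widehat P_{\ell,m}$ is the constant $\widehat\varphi_\ell\delta_{m\ell}$ and $(\ell^2-k^2)/(|\ell|+|k|)=|\ell|-|k|$, is unaffected, and your derivation of it is correct --- and that is the only form of the lemma the paper actually uses.
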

Then, we have that
    \begin{equation*}
    \partial_1 \varphi^{(1)}_b =\partial_{1} \big([h^{(0)},\mathcal{H}] \partial_1\varphi^{(0)}\big)=\partial_{1} \left([h^{(0)},\mathcal{H}]\left(-\nu \Lambda^3 h^{(0)}-\Lambda h^{(0)}\right)\right).
    \end{equation*}
Thus, we have that
\begin{equation}\label{h1}
\pat h^{(1)}=-\nu \Lambda^3 h^{(1)}-\Lambda h^{(1)}+\partial_{1} \left([h^{(0)},\mathcal{H}]\left(-\nu \Lambda^3 h^{(0)}-\Lambda h^{(0)}\right)\right).
\end{equation}
We define
\begin{equation}\label{f}
f=h^{(0)}+\sigma h^{(1)}.
\end{equation}
Then, we have that
\begin{equation}\label{eqfapprox}
\pat f=-\nu \Lambda^3 f-\Lambda f+\sigma\partial_{1} \left([f,\mathcal{H}]\left(-\nu \Lambda^3 f-\Lambda f\right)\right)+\cO\pare{\sigma^2}.
\end{equation}
Consequently, in the renormalized variables $f=\sigma f$,
\begin{equation}\label{eqf}
\pat f=-\nu \Lambda^3 f-\Lambda f+\partial_{1} \left([f,\mathcal{H}]\left(-\nu \Lambda^3 f-\Lambda f\right)\right).
\end{equation}
is the desired asymptotic model for the Darcy flow. 

\begin{remark}Some equivalent ways of writing \eqref{eqf} are
\begin{align}
\pat f&=-\nu \Lambda^3 f-\Lambda f+\nu\left(\Lambda \left(f \Lambda^3 f\right)-\partial_{1} \left(f \partial_1^3 f\right)\right)+\partial_{1} \left(f \partial_1 f\right)+\Lambda \left(f \Lambda f\right)\label{eqf2}\\
&=-\nu \Lambda^3 f-\Lambda f+\nu\left([\Lambda,f] \Lambda^3 f-\partial_{1} f \partial_1^3 f\right)+\left(\partial_1 f\right)^2+[\Lambda,f]\Lambda f\label{eqf3}
\end{align}
 \end{remark}
 
\section{Forchheimer flow}\label{ref:Forchheimer}
\subsection{The equations in the Eulerian formulation}
In this section we consider the fluid domain as described in \ref{ref:domain}. When the Reynolds number of the two-dimensional flow in porous media becomes larger, a correction term has to be added to \eqref{muskat}. Then, one obtaints the so-called Forchheimer equation:
\begin{subequations}\label{forchheimer}
\begin{alignat}{2}
\beta\rho |u| u+\frac{\mu}{\kappa} u+\nabla p&=-\rho G e_2,  \qquad&&\text{in}\quad \Omega(t)\times[0,T]\,,\\
\nabla\cdot u &=0,  \qquad&&\text{in}\quad \Omega(t)\times[0,T]\,,\\
p &= -\gamma \mathcal{K}_{\Gamma(t)}\qquad &&\text{on }\Gamma(t)\times[0,T],\\
\pat h &= u\cdot(-\pax h,1)\qquad &&\text{on }\Gamma(t)\times[0,T],
\end{alignat}
\end{subequations}
where the additional Forchheimer term
$$
\beta \rho |u| u
$$
accounts for high velocity inertial effects, see \cite{Bejan}. The scalar $\beta$ denotes the Forchheimer coefficient (units of $length^{-1}$). Again, the system \eqref{forchheimer} is supplemented with the initial condition \eqref{eq:initial} for $h$.

As before, we use a formulation based on the stream function, $\psi$, and the tangential velocity, $\omega$. In particular, 
\begin{align*}
 {\frac{\mu}{\kappa}}\sqrt{g}\omega&=-{\frac{\mu}{\kappa}} u\cdot \sqrt{g}\tau\\
&=\nabla p\bigg{|}_{\Gamma(t)}\cdot\sqrt{g}\tau +\rho G \partial_1 h-\beta\rho|u|u\cdot \sqrt{g}\tau\\
&=\partial_1\left( p|_{\Gamma(t)}\right) +\rho G \partial_1 h\\
&=\partial_1 \left(-\gamma\frac{\partial_1^2 h}{\left(1+(\partial_1 h)^2\right)^{3/2}}\right) +\rho G \partial_1 h-\beta\rho|u|u\cdot \sqrt{g}\tau.
\end{align*}
Then, using
$$
\nabla^\perp\cdot\left(|\nabla^\perp \psi| \nabla^\perp \psi\right)= \av{\nabla \psi} \Delta \psi + \frac{1}{\av{\nabla \psi}} \ \bra{2 \pa_1\psi\ \pa_2  \psi \ \pa _{12}\psi + \pare{\pa_1\psi}^2\pa_1^2\psi + \pare{\pa_2\psi}^2\pa_2^2\psi}
$$
we deduce that \eqref{forchheimer} is equivalent to
\begin{subequations}\label{forchheimer2}
\begin{alignat}{2}
 \frac{\mu}{\kappa} \Delta \psi = & \ -\beta\rho  \av{\nabla \psi} \Delta \psi &&\nonumber\\ 
&-\beta\rho \left( \frac{1}{\av{\nabla \psi}} \ \bra{2 \pa_1\psi\ \pa_2  \psi \ \pa _{12}\psi + \pare{\pa_1\psi}^2\pa_1^2\psi + \pare{\pa_2\psi}^2\pa_2^2\psi}\right),&&\text{in}\quad \Omega(t)\times[0,T]\,,\label{forchheimer2a}\\
 \nabla \psi \cdot n  = & \ \frac{\kappa}{\mu\sqrt{g}}\left(\partial_1 \left(-\gamma\frac{\partial_1^2 h}{\left(1+(\partial_1 h)^2\right)^{3/2}}\right) +\rho G \partial_1 h\right)  \nonumber \\
& {-\frac{\kappa \beta\rho}{\mu}|\nabla\psi|\nabla^\perp\psi\cdot\tau}, &&\text{on}\quad \Gamma(t)\times[0,T]\,,\\
\pat h = & \   \partial_1 \psi (x_1,h(x_1,t),t) &&\text{on }\Gamma(t)\times[0,T],
\end{alignat}
\end{subequations}

\subsection{Nondimensional Eulerian formulation}
We use the same nondimensional scaling introduced in Section \ref{sec:nondim_Darcy} which we recall here for the sake of clarity; 
we denote by $H$ and $L$ the typical amplitude and wavelength of the interfaces in a porous medium and consider the dimensionless variables (denoted with $\tilde{\cdot}$) defined in \eqref{dimensionless1} and \eqref{dimensionless2}.
Let us denote as 
\begin{equation}\label{eq:def_f}
{\Xi}\pare{\nabla\psi, \nabla^2\psi} =  \left(  \av{\nabla \psi} \Delta \psi + \frac{1}{\av{\nabla \psi}} \ \bra{2 \pa_1\psi\ \pa_2  \psi \ \pa _{12}\psi + \pare{\pa_1\psi}^2\pa_1^2\psi + \pare{\pa_2\psi}^2\pa_2^2\psi}\right).
\end{equation}
With such notation we can compactly re-write \eqref{forchheimer2a} as
\begin{equation*}
\frac{\mu}{\kappa}\Delta \psi =-\beta\rho\ {\Xi}\pare{\nabla\psi, \nabla^2\psi}, 
\end{equation*}
from where, using the dimensionless variables and unknowns above defined, we deduce that
\begin{align*}
\frac{\mu}{\kappa}\Delta_x \psi\pare{x, t} & = \frac{\mu}{\kappa} \frac{1}{L}\frac{\kappa \rho G}{\mu}\frac{H}{L} \ \Delta_{\tilde{x} }\tilde{\psi}\pare{\tilde{x}, \tilde{t}}, \\
{\Xi}\pare{\nabla_x \psi \pare{x, t} , \nabla^2_x \psi\pare{x, t}} & = {\frac{1}{L}\pare{\frac{\kappa \rho G}{\mu}}^2 \pare{\frac{H}{L}}^2} {\Xi}\pare{\nabla_{\tilde{x}} \tilde{\psi} \pare{\tilde{x}, \tilde{t}} , \nabla^2_{\tilde{x}} \tilde{\psi} \pare{\tilde{x}, \tilde{t}}}.
\end{align*}
As a consequence, we obtain the nondimensional form of \eqref{forchheimer2a}
\begin{equation*}
\Delta_{\tilde{x}} \tilde{\psi}\pare{\tilde{x}, \tilde{t}} = \pare{\frac{\beta \kappa^2 \rho^2 G}{\mu^2}} \pare{\frac{H}{L}}{\Xi}\pare{\nabla_{\tilde{x}} \tilde{\psi} \pare{\tilde{x}, \tilde{t}} , \nabla^2_{\tilde{x}} \tilde{\psi} \pare{\tilde{x}, \tilde{t}}}. 
\end{equation*}

Performing similar computations as in Section \ref{sec:nondim_Darcy}, we can finally write the nondimensional form of the Forchheimer model
\begin{align*}
\Delta_{\tilde{x}} \tilde{\psi}& = \pare{\frac{\beta \kappa^2 \rho^2 G}{\mu^2}} \pare{\frac{H}{L}}\ {\Xi}\pare{\nabla_{\tilde{x}} \tilde{\psi}  , \nabla^2_{\tilde{x}} \tilde{\psi} } &\text{in}\quad \widetilde{\Omega}(t)\times[0,T]\,,\\
\nabla_{\tilde{x}_1} \tilde{\psi} \cdot {\left(-\frac{H}{L}\partial_{\tilde{x}_1} \tilde{h},1\right)}
&=\partial_{\tilde{x}_1} \left(-\frac{\gamma}{\rho  GL^2}\frac{\partial_{\tilde{x}_1}^2 \tilde{h}}{\left(1+\left(\frac{H}{L}\partial_{\tilde{x}_1} \tilde{h}\right)^2\right)^{3/2}}\right) +  \partial_{\tilde{x}_1} \tilde{h}&&\\
&\quad -\frac{H}{L}\frac{\kappa^2 \beta\rho^2 G}{\mu^2}|\nabla\tilde{\psi}|\nabla^\perp\tilde{\psi}\cdot\left(1,\frac{H}{L} \partial_{\tilde{x}_1}\tilde{h}\right),  &\text{on}\quad \widetilde{\Gamma}(t)\times[0,T]\,,\\
\partial_{\tilde{t}} \tilde{h} &= \partial_{\tilde{x}_1} \tilde{\psi} \left(\tilde{x}_1,\frac{H}{L}\tilde{h}(\tilde{x}_1,\tilde{t}),\tilde{t}\right) &\text{on}\quad\widetilde{\Gamma}(t)\times[0,T].
\end{align*}
Defining the dimensionless constants
\begin{align*}
\sigma=\frac{H}{L}, && \nu=\frac{\gamma}{L^2\rho G}, && \lambda = \frac{\beta \kappa^2 \rho^2 G}{\mu^2}, 
\end{align*}
and dropping the tilde notation we deduce the system
\begin{subequations}\label{forchheimer2v2}
\begin{alignat}{2}
\Delta \psi&=\lambda \sigma \ {\Xi}\pare{\nabla\psi, \nabla^2\psi},  &&\text{in}\quad \Omega(t)\times[0,T]\,,\\
\nabla \psi \cdot {\left(-\sigma\partial_1 h,1\right)}
&= {\partial_1 \left(-\nu\frac{ \partial_1^2 h}{\left(1+\left(\sigma \partial_1 h\right)^2\right)^{3/2}}\right) + \partial_1 h}&&\nonumber\\
&\qquad-\lambda \sigma |\nabla\psi|\nabla^\perp\psi\cdot\left(1,\sigma \partial_{x_1}h\right), &&\text{on}\quad \Gamma(t)\times[0,T]\,,\\
\pat h &= \partial_1 \psi \left(x_1,\sigma h(x_1,t),t\right) &&\text{on}\quad \Gamma(t)\times[0,T].
\end{alignat}
\end{subequations}

\section{The asymptotic model for Forchheimer flow}\label{sec:forchheimer_fixed domain}
In this section we want to reduce the System \eqref{forchheimer2v2} to a fixed boundary and provide an asymptotic development  in terms of the{steepness} parameter $ \sigma $ analogously to what was done in Section \ref{sec:muskat_fixed domain}. Since the change of variables and the computation we do in this section are rather similar to the ones performed in detail in Section \ref{sec:muskat_fixed domain} we will often avoid to provide a computation and refer to Section \ref{sec:muskat_fixed domain} instead. 

Let us consider the $ \cC^1 $ diffeomorphism $ \Psi $ as in \eqref{eq:diff_Psi} which maps a moving {domain} $ \Omega\pare{t} $ onto the reference {domain} $ \Omega $ and let us define $ A=\pare{\nabla\Psi}^{-1} $ as in \eqref{eq:matrix_A}. Analogously as what was defined in \eqref{eq:vort_and_stream_in_fixed_domain} we define $ \varpi $ and $ \varphi $ as the back-to-label map of the vorticity and stream function respectively. We remark that, in this setting, the only new term appearing in the system \eqref{forchheimer2v2} is $ \Xi \pare{\nabla\psi, \nabla^2 \psi} $.

 Using \eqref{eq:rule_derivation_fixed_domain} we deduce that
\begin{equation*}
(\nabla\psi) \circ\Psi  =
\pare{
\begin{array}{c}
A_1^k\partial_k\varphi \\[2mm] A_2^k\partial_k\varphi
\end{array}
}=
 \pare{
\begin{array}{c}
\partial_1 \varphi \\[2mm] -\sigma \partial_1 h \partial_1\varphi +\partial_2 \varphi
\end{array}
}, 
\end{equation*}
from which we can easily obtain the following identity
\begin{equation*}
\begin{aligned}
\av{(\nabla\psi) \circ\Psi}&= \sqrt{\av{\nabla\varphi}^2 - 2\sigma \partial_1h \ \partial_1\varphi\partial_2\varphi + \sigma^2 \pare{\partial_1 h}^2\pare{\partial_1\varphi}^2}. 
\end{aligned}
\end{equation*}
Since $ \sqrt{1+x}=1+\frac{x}{2}+\cO\pare{x^2} $ we immediately deduce that
\begin{equation*}
\av{(\nabla\psi) \circ\Psi} = \av{\nabla\varphi}\left(1- \sigma \partial_1h \frac{\partial_1\varphi\partial_2\varphi}{\av{\nabla\varphi}^2}+O\pare{\sigma^2}\right). 
\end{equation*}

In the same way as above, using the fact that $ \frac{1}{\sqrt{1+x}}=1-\frac{x}{2}+\cO\pare{x^2} $ we deduce
\begin{equation*}
\frac{1}{\av{(\nabla\psi) \circ\Psi}} = \frac{1}{\av{\nabla\varphi}}\left(1+\sigma \partial_1h \frac{\partial_1\varphi\partial_2\varphi}{\av{\nabla\varphi}^2}\right) +\cO\pare{\sigma^2}
\end{equation*}
Using the above observations, we can compute the leading asymptotic term of $ \sigma\lambda \ \Xi\pare{\nabla\psi, \nabla^2\psi} $:

\begin{align*}
\sigma\lambda \ \Xi\pare{\nabla\psi,\nabla^2\psi} &= {\sigma\lambda}\left(\av{\nabla\varphi}\Delta\varphi +\frac{2 \pa_1\varphi\ \pa_2  \varphi \ \pa _{12}\varphi + \pare{\pa_1\varphi}^2\pa_1^2\varphi + \pare{\pa_2\varphi}^2\pa_2^2\varphi}{\av{\nabla\varphi}}\right) + \cO\pare{\sigma^2}, \\
&=\sigma\lambda \ \Xi\pare{\nabla\varphi,\nabla^2\varphi}+\cO\pare{\sigma^2}. 
\end{align*}

Supposing now that $ \varphi $ and $ h $ admit the asymptotic expansions provided in \eqref{ansatz}, we can deduce the equations satisfied by the leading order terms $ \varphi^{\pare{0}}, \varphi^{\pare{1}}, h^{\pare{0}} $ and $ h^{\pare{1}} $ ;

\begin{subequations}\label{Fcase0}
\begin{align}
\Delta \varphi^{(0)} &= 0\text{ on }\quad \Omega\times[0,T],\\
\partial_2\varphi^{(0)}&=\partial_1 \pare{h^{\pare{0}} - \nu\partial_1^2 h^{\pare{0}}}\text{ on }\quad \Gamma\times[0,T],\\
\pat h^{(0)} &= \partial_1 \varphi^{(0)}\text{ on }\quad \Gamma\times[0,T],
\end{align}
\end{subequations}
and
\begin{subequations}\label{Fcase1}
\begin{align}
\Delta \varphi^{(1)} &=  \partial_1^2 h^{(0)} \partial_{ {2}}\varphi ^{(0)}+ 2\partial_{1}h^{(0)} \partial_{12}\varphi^{(0)}{+\lambda\av{\nabla\varphi}^{\pare{0}}\Delta\varphi^{\pare{0}}} \\
& \ + \frac{\lambda}{\av{\nabla\varphi^{\pare{0}}}} \bigg{[}2 \partial_1 \varphi^{\pare{0}} \ \partial_2 \varphi^{\pare{0}}\ \partial_{12}^2 \varphi^{\pare{0}}\nonumber\\
&\quad+ \partial_1^2 \varphi^{\pare{0}} \pare{\partial_1 \varphi^{\pare{0}}}^2 + \partial_2^2 \varphi^{\pare{0}} \pare{\partial_2 \varphi^{\pare{0}}}^2\bigg{]}\text{ on }\quad \Omega\times[0,T] \nonumber\\
\partial_2\varphi^{(1)}&=\partial_1 \pare{h^{\pare{1}} - \nu\partial_1^2 h^{\pare{1}}}  {+} \partial_1 h^{\pare{0}} \partial_1 \varphi^{\pare{0}} +\lambda |\nabla\varphi^{\pare{0}}|\partial_2\varphi^{\pare{0}} \text{ on }\quad \Gamma\times[0,T]\\
\pat h^{(1)} &= \partial_1 \varphi^{(1)}\text{ on }\quad \Gamma\times[0,T].
\end{align}
\end{subequations}

Then, we can compute $ \varphi^{\pare{0}} $ in terms of $ h^{\pare{0}} $, which is the solution of the equation
\begin{equation*}
\partial_t h^{\pare{0}} = -\nu \Lambda^3 h^{\pare{0}} -\Lambda h^{\pare{0}}. 
\end{equation*}

As in the Darcy flow case we can decompose $ \varphi^{\pare{1}}=\varphi^{\pare{1}}_a + \varphi^{\pare{1}}_b + \varphi^{\pare{1}}_c $ which solve 
\begin{equation*}
\begin{aligned}
\Delta \varphi^{(1)}_a &=0\text{ on }\quad \Omega\times[0,T]\\
\partial_2\varphi^{(1)}_a &=\partial_1 \pare{h^{\pare{1}} - \nu\partial_1^2 h^{\pare{1}}}  \text{ on }\quad \Gamma\times[0,T],
\end{aligned}
\end{equation*}
\begin{equation*}
\begin{aligned}
\Delta \varphi^{(1)}_b &=  \partial_1^2 h^{(0)} \partial_{ {2}}\varphi ^{(0)}+ 2\partial_{1}h^{(0)} \partial_{12}\varphi^{(0)} \text{ on }\quad \Omega\times[0,T]\\
\partial_2\varphi^{(1)}_b&=\partial_1 h^{\pare{0}} \partial_1 \varphi^{\pare{0}}\text{ on }\quad \Gamma\times[0,T],
\end{aligned}
\end{equation*}
\begin{equation*}
\begin{aligned}
\Delta \varphi^{(1)}_c &=
 \frac{\lambda}{\av{\nabla\varphi^{\pare{0}}}} \bra{2 \partial_1 \varphi^{\pare{0}} \ \partial_2 \varphi^{\pare{0}}\ \partial_{12}^2 \varphi^{\pare{0}}+ \partial_1^2 \varphi^{\pare{0}} \pare{\partial_1 \varphi^{\pare{0}}}^2 + \partial_2^2 \varphi^{\pare{0}} \pare{\partial_2 \varphi^{\pare{0}}}^2} \\
& = \lambda \ \Xi \pare{\nabla\varphi^{\pare{0}}, \nabla^2\varphi^{\pare{0}}} \quad
\text{ on }\quad \Omega\times[0,T] \nonumber\\
\partial_2\varphi^{(1)}_c&=\lambda |\nabla\varphi^{\pare{0}}|\partial_2\varphi^{\pare{0}}\text{ on }\quad \Gamma\times[0,T].
\end{aligned}
\end{equation*}
We note that
\begin{align*}
I&=\int_{\Omega}\frac{1}{\av{\nabla\varphi^{\pare{0}}}} \bra{2 \partial_1 \varphi^{\pare{0}} \ \partial_2 \varphi^{\pare{0}}\ \partial_{12}^2 \varphi^{\pare{0}}+ \partial_1^2 \varphi^{\pare{0}} \pare{\partial_1 \varphi^{\pare{0}}}^2 + \partial_2^2 \varphi^{\pare{0}} \pare{\partial_2 \varphi^{\pare{0}}}^2}\dx_1\dx_2\\
&=\int_{\Omega}\nabla\cdot\left(| \nabla \varphi^{\pare{0}}|\nabla \varphi^{\pare{0}}\right)\dx _1 \dx _2\\
&=\int_{\Gamma}| \nabla \varphi^{\pare{0}}|\partial_2 \varphi^{\pare{0}}\dx_1,
\end{align*}
where we have used that $\varphi^{\pare{0}}$ is harmonic and the divergence theorem. As a consequence, the compatibility conditions are satisfied and the previous elliptic problems have unique solutions.

Define $f$ as in \eqref{f}. We introduce the following auxiliary function
$$
\varphi^{\aux}_1=\varphi^{(0)}+\sigma\varphi^{(1)}_a.
$$
We observe that $\varphi_{\aux}$ solves the problem
\begin{subequations}\label{varphiaux1}
\begin{align}
\Delta \varphi^{\aux}_1 &= 0\text{ on }\quad \Omega\times[0,T],\\
\partial_2 \varphi^{\aux}_1&=\partial_1 \pare{f - \nu\partial_1^2 f}\text{ on }\quad \Gamma\times[0,T].
\end{align}
\end{subequations}
We also define 
\begin{subequations}\label{varphiaux2}
\begin{align}
& \begin{multlined}
 \Delta \varphi^{\aux}_2 =\frac{\lambda}{\av{\nabla\varphi^{\aux}_1}} \bigg{[}2 \partial_1 \varphi^{\aux}_1 \ \partial_2 \varphi^{\aux}_1\ \partial_{12}^2 \varphi^{\aux}_1+ \partial_1^2 \varphi^{\aux}_1 \pare{\partial_1 \varphi^{\aux}_1}^2
 + \partial_2^2 \varphi^{\aux}_1 \pare{\partial_2 \varphi^{\aux}_1}^2\bigg{]} \\
= \lambda \ \Xi \pare{\nabla \varphi^{\aux}_1, \nabla^2 \varphi^{\aux}_1 }\quad  \text{ on }\quad \Omega\times[0,T],
\end{multlined}\\
& \partial_2 \varphi^{\aux}_2=\lambda |\nabla\varphi^{\aux}_1|\partial_2\varphi^{\aux}_1 \quad \text{ on }\quad \Gamma\times[0,T].
\end{align}
\end{subequations}
With this definition, and using 
\begin{align*}
\varphi^{\aux}_1-\varphi^{(0)}=\mathcal{O}(\sigma), && \Longrightarrow && \Xi \pare{\nabla \varphi^{\aux}_1, \nabla^2 \varphi^{\aux}_1 } - \Xi \pare{\nabla \varphi^{\pare{0}}, \nabla^2\varphi^{\pare{0}} } = \cO\pare{\sigma}  
\end{align*}
we find that
\begin{subequations}\label{error}
\begin{align}
\Delta \left(\varphi^{\aux}_2-\varphi^{(1)}_c\right) &= \mathcal{O}(\sigma)\text{ on }\quad \Omega\times[0,T],\\
\partial_2 \left(\varphi^{\aux}_2-\varphi^{(1)}_c\right)&=\mathcal{O}(\sigma)\text{ on }\quad \Gamma\times[0,T],
\end{align}
\end{subequations}
and, as a consequence,
$$
\sigma \varphi^{(1)}_c =\sigma \varphi^{\aux}_2+\mathcal{O}(\sigma^2).
$$
Thus, we find the following equation
\begin{align}
\partial_t f&=\partial_1 \varphi^{(0)}(x_1,0)+\sigma\partial_1 \varphi^{(1)}_a(x_1,0)+\sigma\partial_1 \varphi^{(1)}_b(x_1,0)+\sigma \partial_1 \varphi^{(1)}_c(x_1,0)\nonumber\\
&=-\nu \Lambda^3 f-\Lambda f+\sigma\partial_{1} \left([h^{(0)},\mathcal{H}]\left(-\nu \Lambda^3 h^{(0)}-\Lambda h^{(0)}\right)\right)+\sigma \partial_1\varphi^{\aux}_2(x_1,0)+\mathcal{O}(\sigma^2)\nonumber\\
&=-\nu \Lambda^3 f-\Lambda f+\sigma\partial_{1} \left([f,\mathcal{H}]\left(-\nu \Lambda^3 f-\Lambda f\right)\right)+\sigma \partial_1\varphi^{\aux}_2(x_1,0)+\mathcal{O}(\sigma^2).\label{eqfForch}
\end{align}
Finally, if we truncate at order $\mathcal{O}(\sigma^2)$ we find the asymptotic model
\begin{align}
\partial_t f&=-\nu \Lambda^3 f-\Lambda f+\sigma\partial_{1} \left([f,\mathcal{H}]\left(-\nu \Lambda^3 f-\Lambda f\right)\right)+\sigma \partial_1\varphi^{\aux}_2(x_1,0)\label{eqf2Forch},
\end{align}
where $\varphi^{\aux}_2$ solves \eqref{varphiaux2}. In the renormalized variables $f=\sigma f$, we find the following system as a model of free boundary flow in the Forchheimer regime
\begin{subequations}\label{asymforcheimer}
\begin{align}
\partial_t f&=-\nu \Lambda^3 f-\Lambda f+\partial_{1} \left([f,\mathcal{H}]\left(-\nu \Lambda^3 f-\Lambda f\right)\right)+ \partial_1\Phi(x_1,0) \label{eq:eq_F_Forchh1}\\
\Delta \Phi &= \frac{\lambda}{\av{\nabla\Upsilon}} \bigg{[}2 \partial_1 \Upsilon \ \partial_2 \Upsilon\ \partial_{12}^2 \Upsilon+ \partial_1^2 \Upsilon \pare{\partial_1 \Upsilon}^2 + \partial_2^2 \Upsilon \pare{\partial_2 \Upsilon}^2\bigg{]}\text{ on }\quad \Omega\times[0,T],\label{eq:eq_Phi_Forchh1}\\
\partial_2 \Phi&=\lambda |\nabla\Upsilon|\partial_2\Upsilon\text{ on }\quad \Gamma\times[0,T]\\
\Delta \Upsilon &= 0\text{ on }\quad \Omega\times[0,T],\\
\partial_2 \Upsilon&=\partial_1 \pare{f - \nu\partial_1^2 f}\text{ on }\quad \Gamma\times[0,T].
\end{align}
\end{subequations}

Our aim is now to express the equation \eqref{eq:eq_F_Forchh1} in terms of $ f $ only, i.e. we want to identify a nonlinear operator $ \cT $ such that
\begin{equation*}
\partial_1 \Phi(x_1,0,t)=\mathcal{T}[f]\pare{x_1, 0, t}.
\end{equation*}

Using Lemma \ref{lem:solutions_Poisson} we can compute the explicit value of $ \Upsilon $, which is
\begin{equation}\label{eq:Upsilon}
\Upsilon =\sum_{k\in\mathbb{Z}}\frac{1}{|k|}\left((-\nu i^3k^3+ik) \widehat{f}(k,t)\right)e^{ix_1 k+|k|x_2}. 
\end{equation}
 {We have} that
\begin{equation*}
\pare{\hat{\Upsilon}\pare{k, x_2, t}}_k = \pare{ \frac{1}{|k|}\left((-\nu i^3k^3+ik) \widehat{f}(k,t)\right) e^{\av{k} x_2}}_k \in \ell^2\pare{\mathbb{Z}}, \ \forall \ x_2 \leqslant 0, 
\end{equation*}
and moreover $ \Upsilon $ is real analytic in $ \mathbb{S}^1\times \pare{-\infty, 0} $, with increasing analyticity strip in the $ x_1 $--direction as $ x_2\to -\infty $. Let us denote now respectively
\begin{align}\label{eq:def_bg}
b_\lambda = \lambda \div \pare{\av{\nabla\Upsilon}\nabla \Upsilon} , && g_\lambda = \lambda \av{\nabla\Upsilon}\left. \Big. \partial_2\Upsilon\right|_{x_2=0}, 
\end{align}
and  {define}
\begin{equation}\label{eq:def_B}
\hat{B}_\lambda \pare{k, t} = \frac{1}{2}  {\int _{-\infty}^0 \hat{b}_\lambda\pare{k, y_2} e^{\av{k} y_2 } \textnormal{d} y_2} . 
\end{equation}
Whence applying again Lemma \ref{lem:solutions_Poisson},  we deduce 
\begin{equation}\label{eq:Phi_x_2=0}
\begin{aligned}
\partial_1\Phi\pare{x_1, 0, t} & = \frac{1}{\sqrt{2\pi}} \sum_{k} i \sgn\pare{k} \left\lbrace    {\hat{g}_\lambda\pare{k, 0, t} + 2{\hat{B}\pare{k, t}}}  
\right\rbrace e^{ik x_1} , \\
& =  {-} \cH \pare{\left. g_\lambda \right|_{x_2=0}+B_\lambda } \pare{x_1, t}. 
\end{aligned}
\end{equation}

Thanks to the explicit formulation of $ \Upsilon $ provided in \eqref{eq:Upsilon} we  {have} that
\begin{equation}\label{eq:relation_derivatives_Upsilon}
\partial_1 \Upsilon =  {-}\cH \partial_2 \Upsilon. 
\end{equation}
 {Due to} the relation \eqref{eq:relation_derivatives_Upsilon} we can deduce the following identities;
\begin{align*}
\left. g \right|_{x_2=0} & = \lambda \av{\nabla\Upsilon}\left. \Big. \partial_2\Upsilon\right|_{x_2=0}, \\
& = \left. \lambda \sqrt{\pare{\cH \partial_2 \Upsilon}^2 + \pare{ \partial_2 \Upsilon}^2} \ \partial_2 \Upsilon\right|_{x_2=0}. 
\end{align*}
But by definition $ \partial_2 \Upsilon $ is the harmonic extension of $ \partial_1\pare{f-\nu\partial_1^2 f} $, whence
\begin{equation}
\label{eq:formulation_g}
\begin{aligned}
\left. g \right|_{x_2=0} & = \left. \lambda \sqrt{\pare{\cH \partial_2 \Upsilon}^2 + \pare{ \partial_2 \Upsilon}^2} \ \partial_2 \Upsilon\right|_{x_2=0}, \\
& =\lambda \sqrt{\pare{\cH \partial_1\pare{f-\nu\partial_1^2 f} }^2 + \pare{ \partial_1\pare{f-\nu\partial_1^2 f} }^2} \ \partial_1\pare{f-\nu\partial_1^2 f} .  
\end{aligned}
\end{equation}

Using \eqref{eq:Phi_x_2=0}
\begin{equation*}
\partial_1 \Phi \pare{x_1, 0, t} =  {-}\lambda \ \cH \pare{  \sqrt{\pare{\cH \partial_1\pare{f-\nu\partial_1^2 f} }^2 + \pare{ \partial_1\pare{f-\nu\partial_1^2 f} }^2} \ \partial_1\pare{f-\nu\partial_1^2 f} }   {-} \cH B_\lambda, 
\end{equation*}
which finally provides the complete evolution equation for $ f $; 
\begin{multline}\label{eq:asymptotic_Forchheimer}
\partial_t f =-\nu \Lambda^3 f-\Lambda f+\partial_{1} \left([f,\mathcal{H}]\left(-\nu \Lambda^3 f-\Lambda f\right)\right)\\
 {-} \lambda \ \cH \pare{  \sqrt{\pare{\cH \partial_1\pare{f-\nu\partial_1^2 f} }^2 + \pare{ \partial_1\pare{f-\nu\partial_1^2 f} }^2} \ \partial_1\pare{f-\nu\partial_1^2 f} }  {-} \cH B_\lambda, 
\end{multline}
where $ B_\lambda $ and $ \Upsilon $ are respectively defined in \eqref{eq:def_B} and \eqref{eq:Upsilon}.

\section{Three dimensional Darcy flow with bottom topography} \label{sec:multidimensional_Darcy}
\subsection{The fluid domain}
The time-dependent three-dimensional finitely deep fluid domain, free surface and bottom boundary are defined as
\begin{align}\label{Omegat2D}
\Omega(t) & = \set{ (x_1, x_2,x_3)\in\bR^3 \ \Big| \ {-L}\pi <x_1,x_2<{L}\pi\,, -d< x_3< h(x_1,x_2,t)\,, \ t\in[0,T] }, \\
\label{Gammat2D}
\Gamma(t) & = \set{ \pare{x_1,x_2,h(x_1,x_2,t)}\in\bR^2 \ \Big| \ {-L}\pi <x_1,x_2<{L}\pi\,,\ t\in[0,T] }
\\
\label{Gammabott2D}
\Gamma_{\textnormal{bot}} & = \set{ \pare{x_1,x_2,-d}\in\bR^2 \ \Big| {-L}\pi <x_1,x_2<{L}\pi\,,\ t\in[0,T] }
\end{align} 
with periodic boundary conditions in the horizontal variables $x_1,x_2$. 

\subsection{The equations in the Eulerian formulation}
In this section we consider the free boundary Darcy problem in three dimensions. We assume that the domain is bounded from below by a flat bottom situated at $ x_3= -d $. The free boundary Darcy problem in such configuration reads as follows:
\begin{align*}\label{muskat}
\frac{\mu}{\kappa} u+\nabla p&=-\rho G e_3,  \qquad&&\text{in}\quad \Omega(t)\times[0,T]\,,\\
\nabla\cdot u &=0,  \qquad&&\text{in}\quad \Omega(t)\times[0,T]\,,\\
p &= -\gamma \mathcal{K}_{\Gamma(t)}\qquad &&\text{on }\Gamma(t)\times[0,T],\\
\pat h &= u\cdot \tilde{n}\qquad &&\text{on }\Gamma(t)\times[0,T], \\
u_3 & = 0, \qquad &&\text{on }\Gamma_{\textnormal{bot}}\times[0,T],
\end{align*}
where $\tilde{n}$ is the non-unitary outward pointing normal vector.

\noindent Indeed in such configuration $ \cK_{\Gamma\pare{t}} $ is the mean curvature of the surface. Since in out setting $ \Gamma\pare{t} $ is given as a graph, the mean curvature assumes the explicit form (cf. \cite{SpivakIII})
\begin{equation*}
\begin{aligned}
\cK_{\Gamma\pare{t}} & = \frac{\pare{1+\pare{\partial_1 h}^2} \partial_2^2 h  + \pare{1+\pare{\partial_2 h}^2} \partial_1^2 h - 2 \partial_1 h \ \partial_2 h \ \partial^2_{12} h}{\pare{1+ \pare{\partial_1 h}^2 + \pare{\partial_2 h}^2}^{3/2}}
\end{aligned}
\end{equation*}

We observe that $u=\nabla \Phi$ where the potential function is given by
\begin{equation*}
\Phi=\frac{\kappa}{\mu}\left(-p-G\rho x_3\right). 
\end{equation*}
With this notation, the equation for the free surface becomes
$$
\partial_t h=\sqrt{1+(\partial_1h)^2+(\partial_2h)^2} \ \left. \partial_n \Phi \right| _{x_3=h}.
$$
Also, using the divergence free condition for the velocity field, we have that $ \Phi $ solves the elliptic problem
\begin{equation}\label{eq:elliptic_equation_Phi}
\left\lbrace
\begin{aligned}
& \Delta \Phi =0, && \text{ in } \Omega\pare{t}, \\
& \Phi = \frac{\gamma\kappa}{\mu} \  \cK_{\Gamma\pare{t}} -\frac{\kappa\rho \ G}{\mu} \ h && \text{ on } \Gamma\pare{t}, \\
& \partial_3\Phi =0 , && \text{ on } \Gamma_{\textnormal{bot}}, 
\end{aligned}
\right. 
\end{equation}
This elliptic equation is uniquely solvable if the zero mean function $ h $ is sufficiently regular (cf. \cite[Chapter 2]{Lannes13}). We can hence completely determine $ \Phi $ from its trace, \emph{i.e.} from $h$ and its derivatives. Thus, the previous equation for the free boundary can equivalently be restated as 
\begin{equation*}
\partial_t h = \cG  \pare{\frac{\gamma\kappa}{\mu} \  \cK_{\Gamma\pare{t}} -\frac{\kappa\rho \ G}{\mu} \ h}, 
\end{equation*}
where $ \cG $ is the \textit{Dirichlet--Neumann} (DN) operator (cf. \cite[Chapter 3]{Lannes13}), \emph{i.e.} the operator that solves the elliptic problem for $\Phi$, compute its normal gradient and takes the trace of this normal gradient up to the boundary.\\

\subsection{Nondimensional Eulerian formulation}
We can now nondimensionalize our equations following the very same procedure explained in Section \ref{sec:nondim_Darcy}. We define the new variables
\begin{align}\label{dimensionless12D}
(x_1,x_2)=L  (\tilde{x_1},\tilde{x_2}),\; x_3=d \tilde{x_3}, && t=\frac{\mu L}{\rho \kappa G} \ \tilde{t},
\end{align}
and unknowns
\begin{align}\label{dimensionless22D}
h(x_1,x_2,t)=H \ \tilde{h}(\tilde{x}_1,\tilde{x}_2,\tilde{t}), && \Phi(x_1,x_2,x_3,t)=\frac{H \kappa \rho G }{\mu} \tilde{\Phi}(\tilde{x}_1,\tilde{x}_2,\tilde{x}_3,\tilde{t}).
\end{align}
We define the following non-dimensional parameters
$$
\delta=\frac{d^2}{L^2},\;\varepsilon=\frac{H}{d}.
$$
These dimensionless quantities are known in the literature as the \emph{shallowness} and \emph{amplitude} parameters. We observe that 
$$
\sigma=\varepsilon\sqrt{\delta}.
$$
The equations in nondimensional form read as follows
\begin{align*}
\delta\left(\partial_1^2\Phi+\partial_2^2\Phi\right)+\partial_3^2\Phi & =0, & \text{ in } \Omega\pare{t}\times \bra{0, T}, \\
\Phi & = \nu\cK^{\sigma}_{\Gamma(t)}- h , & \text{ on } \Gamma\pare{t}\times \bra{0, T}, \\
\partial_t h & = \cG\pare{\nu\cK^{\sigma}_{\Gamma(t)}- h}, & \text{ on } \Gamma\pare{t}\times \bra{0, T},  \\
\partial_3 \Phi& =0 , & \text{ on } \Gamma_{\textnormal{bot}}\times \bra{0, T}.
\end{align*}
where the Bond number was given in \eqref{eq:dimensionless_numbers}, the nondimensional fluid domain and free surface are
\begin{align}\label{Omegat2Dv2}
\Omega(t) & = \set{ (\tilde{x}_1, \tilde{x}_2,\tilde{x}_3)\in\bR^3 \ \Big| \ \pi <x_1,x_2<\pi\,, -1< x_3< \varepsilon h(x_1,x_2,t)\,, \ t\in[0,T] }, \\
\label{Gammat2Dv2}
\Gamma(t) & = \set{ \pare{\tilde{x}_1,\tilde{x}_2,\varepsilon h(\tilde{x}_1,\tilde{x}_2,t)}\in\bR^2 \ \Big| \ \pi <x_1,x_2<\pi\,,\ t\in[0,T] }
\\
\label{Gammabott2Dv2}
\Gamma_{\textnormal{bot}} & = \set{ \pare{\tilde{x}_1,\tilde{x}_2,-1}\in\bR^2 \ \Big| \pi <\tilde{x}_1,\tilde{x}_2<\pi\,,\ t\in[0,T] }
\end{align} 
and the non-dimensional curvature is given by
\begin{equation*}
\cK^{\sigma}_{\Gamma(t)} =\ \frac{\pare{1+\pare{\sigma\partial_1 h}^2} \partial_2^2 h  + \pare{1+\pare{\sigma \partial_2 h}^2} \partial_1^2 h - 2\sigma^2 \partial_1 h \ \partial_2 h \ \partial^2_{12} h}{\pare{1+ \pare{\sigma\partial_1 h}^2 + \pare{\sigma\partial_2 h}^2}^{3/2}} \ , 
\end{equation*}

\section{The asymptotic model for three dimensional Darcy flow in presence of a bottom topography} \label{ref:asymDarcy3D}
As we are interested in the weak nonlinearity limit (and not in the shallow water limit), we fix now $\delta=1$ (so, $\sigma$ and $\varepsilon$ are comparable).\\

In the previous section we have obtained a closed formulation for the evolution of $ h $ (albeit it is highly nontrivial and nonlinear) in terms of the Dirichlet-Neumann operator. It is though possible to perform a development of the DN operator in terms of the steepness parameter $ \sigma $ (cf. \cite{CSS92, CSS97} and \cite[Section 3.6.2]{Lannes13}) around the rest state. Let us define the linear operator
\begin{equation*}
\widehat{\cG_0 \phi}\pare{\xi}  = \av{\xi}\tanh\av{\xi}\ \hat{\phi}\pare{\xi}, 
\end{equation*}
then following \cite{Lannes13} we know that 
\begin{equation*}
\cG \phi = \cG_0 \phi - \sigma\pare{\big. \cG_0\pare{ h \cG_0 \phi} + \nabla\cdot\pare{h\nabla\phi}} + \cO\pare{\sigma^2}, 
\end{equation*}
and since
\begin{equation*}
\cK^{\nu, \sigma}_{h\pare{t}} = \nu \Delta h + \cO\pare{\sigma^2}, 
\end{equation*}
we can drop the $ \cO\pare{\sigma^2} $ contributions to deduce the following asymptotic model 
\begin{equation}\label{eq:multid}
\partial_t h - \nu \cG_0 \Delta h + \cG_0 h = - \nu\sigma \pare{\big. \cG_0\pare{ h \cG_0 \Delta h} + \nabla\cdot\pare{h\nabla\Delta h}}  + \sigma\pare{\big. \cG_0\pare{ h \cG_0 h} + \nabla\cdot\pare{h\nabla h}}.
\end{equation}
So, in the renormalized variables $f=\sigma h$, we find that
\begin{equation}\label{eq:multid2}
\partial_t f - \nu \cG_0 \Delta f + \cG_0 f = - \nu \pare{\big. \cG_0\pare{ f\cG_0 \Delta f} + \nabla\cdot\pare{f\nabla\Delta f}}  + \pare{\big. \cG_0\pare{ f \cG_0 f} + \nabla\cdot\pare{f\nabla f}}.
\end{equation}

\begin{remark}
In the case in which there is no bottom the first-order approximation of the DN operator is 
\begin{equation*}
\cG_0 \phi = \Lambda \phi.
\end{equation*}
Hence, in the case where the depth is infinite and the flow is three dimensional, we recover the multi-dimensional asymptotic model
\begin{equation}\label{eq:multid3}
\partial_t f - \nu \Lambda\Delta f + \Lambda f = - \nu \pare{\big. \Lambda\pare{ f\Lambda \Delta f} + \nabla\cdot\pare{f\nabla\Delta f}}  + \pare{\big. \Lambda\pare{ f \Lambda f} + \nabla\cdot\pare{f\nabla f}}.
\end{equation}
This model is completely analogous to \eqref{eqf}.
\end{remark}

\appendix
\section{The explicit solution of an elliptic problem}
\begin{lemma}\label{lem:solutions_Poisson}
Let us consider the Poisson equation
\begin{equation}
\label{eq:Poisson}
\left\lbrace
\begin{aligned}
& \Delta u \pare{x_1 , x_2} &&= b \pare{x_1 , x_2}, & \pare{x_1, x_2} &\in \mathbb{S}^1\times  \pare{-\infty, 0}, \\
& \partial_2 u \pare{x_1, 0} &&= g \pare{x_1} , & x_1 &\in \mathbb{S}^1, \\
& u \pare{x_1, -\infty} && =0, & x_1 &\in \mathbb{S}^1,
\end{aligned}
\right. 
\end{equation}
where we  {assume that the forcing $ b\in H^4(\Omega)$ and $g\in H^1(\Omega)$ satisfy the compatibility condition
$$
\int_\Omega b(x_1,x_2)dx_1dx_2=\int_\Gamma g(x_1)dx_1. 
$$
Then, }the unique solution $ u $ of \eqref{eq:Poisson}  is
\begin{equation} \label{eq:solution_Poisson}
\begin{aligned}
u \pare{x_1, x_2} =  {-} \frac{1}{\sqrt{2\pi}} \sum_{k}& \left\lbrace  \frac{1}{\av{k}} \bra{\frac{1}{2} \int _{ -\infty}^0  \hat{b}\pare{k, y_2} e^{\av{k} y_2 } \textnormal{d} y_2  {-} \hat{g}\pare{k} } e^{\av{k} x_2} \right.  \\
& +\frac{1}{2\av{k}} \int _{ -\infty}^0  \hat{b}\pare{k, y_2} e^{\av{k} y_2 } \textnormal{d} y_2
\  e^{- \av{k} x_2}  \\
& \left. 
+ \int_0^{x_2} \frac{\hat{b}\pare{k, y_2}}{2\av{k}} \bra{e^{\av{k}\pare{y_2 - x_2 }} - e^{\av{k}\pare{x_2 - y_2 }}} \textnormal{d} y_2, 
\right\rbrace e^{ik x_1} , 
\end{aligned}
\end{equation} 
where the operator $ \hat{\cdot} $ denotes the Fourier transform in the variable $ x_1 $. 
\end{lemma}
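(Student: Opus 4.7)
The plan is to apply the Fourier series expansion in the periodic variable $x_1$, which uncouples the Poisson problem on $\Omega = \mathbb{S}^1 \times (-\infty, 0)$ into a family of two--point ODE boundary value problems on the half--line $(-\infty, 0]$, one per frequency $k \in \mathbb{Z}$. Writing $u(x_1, x_2) = \sum_k \hat{u}(k, x_2) e^{ikx_1}$ and similarly for $b$ and $g$, one obtains for each $k$
\begin{equation*}
\partial_2^2 \hat{u}(k, \cdot) - k^2 \hat{u}(k, \cdot) = \hat{b}(k, \cdot), \qquad \partial_2 \hat{u}(k, 0) = \hat{g}(k), \qquad \hat{u}(k, -\infty) = 0.
\end{equation*}
Solving each of these ODEs explicitly and then regrouping the resulting expressions into the form \eqref{eq:solution_Poisson} is then a matter of algebraic bookkeeping followed by inverse Fourier transform.

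For $k \neq 0$, the homogeneous equation has fundamental solutions $e^{\pm |k| x_2}$ with Wronskian $-2|k|$. Standard variation of parameters produces a particular solution of the form
\begin{equation*}
\hat{u}_p(k, x_2) = \int_0^{x_2} \frac{\hat{b}(k, y_2)}{2|k|} \bra{e^{|k|(y_2 - x_2)} - e^{|k|(x_2 - y_2)}} \textnormal{d} y_2,
\end{equation*}
to which I add a homogeneous combination $A(k) e^{|k| x_2} + B(k) e^{-|k| x_2}$. The decay condition $\hat{u}(k, -\infty) = 0$ forces the coefficient of $e^{-|k| x_2}$ to cancel exactly the exponentially growing contribution emerging from $\hat{u}_p$ as $x_2 \to -\infty$; this fixes $B(k)$ in terms of $\tfrac{1}{2|k|}\int_{-\infty}^{0} \hat{b}(k, y_2) e^{|k| y_2} \textnormal{d} y_2$ and accounts for the $e^{-|k| x_2}$--line of \eqref{eq:solution_Poisson}. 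The Neumann condition at $x_2 = 0$ then determines $A(k)$ from $\hat{g}(k)$ together with the boundary contribution of the particular solution, yielding the $e^{|k| x_2}$--line.

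The main subtlety, and the only place where the hypotheses genuinely intervene, is the zero mode $k = 0$. The ODE degenerates to $\partial_2^2 \hat{u}(0, \cdot) = \hat{b}(0, \cdot)$; integrating once over $(-\infty, x_2)$ and requiring $\partial_2 \hat{u}(0, x_2)$ to decay at $-\infty$ (a necessary condition for $\hat{u}(0, \cdot)$ to remain bounded and decay) forces $\hat{g}(0) = \int_{-\infty}^0 \hat{b}(0, s) \textnormal{d} s$, which by Parseval is precisely the compatibility assumption $\int_\Omega b = \int_\Gamma g$. Under this condition the zero mode admits a decaying solution. The regularity $b \in H^4$, $g \in H^1$ gives enough summability of $\hat{b}(k,\cdot)$ and $\hat{g}(k)$ to legitimize the Fubini interchanges and the termwise inverse Fourier transform producing \eqref{eq:solution_Poisson}. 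Uniqueness is a short energy argument on truncated strips $\mathbb{S}^1 \times (-R, 0)$: the difference of two solutions is harmonic with vanishing Neumann trace on $\Gamma$, and multiplying by itself, integrating by parts, and sending $R \to \infty$ using the decay condition forces it to be identically zero.
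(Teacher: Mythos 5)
Your proposal is correct and follows essentially the same route as the paper: Fourier expansion in $x_1$, reduction to a family of constant-coefficient ODEs on $(-\infty,0]$, variation of parameters with fundamental solutions $e^{\pm|k|x_2}$, determination of the two constants from the decay condition at $-\infty$ and the Neumann condition at $x_2=0$, and the observation that the compatibility condition is exactly what makes the degenerate $k=0$ mode solvable with decay. Your added remarks on uniqueness via an energy argument on truncated strips and on the role of the regularity hypotheses are consistent with, and slightly more explicit than, the paper's treatment.
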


\begin{proof}
Let us apply the Fourier transform to the equation \eqref{eq:Poisson}, this transforms the PDE \eqref{eq:Poisson} in the following series of second-order inhomogeneous costant coefficients ODE's
\begin{equation}
\label{eq:Poisson_Fourier}
\left\lbrace
\begin{aligned}
& -k^2 \hat{u}\pare{k, x_2} + \partial_2^2 \hat{u}\pare{k, x_2} = \hat{b} \pare{k, x_2}, & \pare{k, x_2} & \in \mathbb{Z}\times \pare{-\infty, 0}, \\
& \partial_2 \hat{u} \pare{k, 0} = \hat{g}\pare{k}, & k & \in \mathbb{Z} , \\
& \hat{u}\pare{k, -\infty} =0 , & k & \in \mathbb{Z}. 
\end{aligned}
\right. 
\end{equation}

The generic solution of \eqref{eq:Poisson_Fourier} can be deduced using the variation of parameters method, whence 
\begin{equation*}
\hat{u}\pare{k, x_2} = C_1 \pare{k} e^{\av{k}x_2} + C_2 \pare{k} e^{- \av{k}x_2} - \int_0^{x_2} \frac{\hat{b}\pare{k, \xi_2}}{2\av{k}} \bra{e^{\av{k}\pare{y_2 - x_2 }} - e^{\av{k}\pare{x_2 - y_2 }}} \textnormal{d} y_2.
\end{equation*}

The boundary conditions determine the values of the $ C_i $'s
\begin{align*}
 C_2 \pare{k} & = - \frac{1}{2\av{k}} \int _{ -\infty}^0 \hat{b}\pare{k, y_2} e^{\av{k} y_2 } \textnormal{d} y_2, 
 &
 C_1\pare{k} & = C_2\pare{k} + \frac{\hat{g}\pare{k}}{\av{k}}.
\end{align*}
We emphasize that $ C_2\pare{k} $ is well-defined for each $ k \neq 0 $. We hence obtain the expression \eqref{eq:solution_Poisson}; in particular we can write $ u=u_++u_- $ where
\begin{align*}
& \begin{multlined}
u_-\pare{x_1, x_2}  = -\frac{1}{\sqrt{2\pi}} \sum_{k} \left\lbrace  \frac{1}{\av{k}} \bra{\frac{1}{2} \int _{ -\infty}^0  \hat{b}\pare{k, y_2} e^{\av{k} y_2 } \textnormal{d} y_2  {-} \hat{g}\pare{k} } e^{\av{k} x_2} \right. \\
 -
\left.  \int_0^{x_2} \frac{\hat{b}\pare{k, y_2}}{2\av{k}} { e^{\av{k}\pare{x_2 - y_2 }}} \textnormal{d} y_2, 
\right\rbrace e^{ik x_1} , 
\end{multlined} \\
& \begin{multlined}
u_+\pare{x_1, x_2}  =  - \frac{1}{\sqrt{2\pi}} \sum_{k} \left\lbrace
\frac{1}{2\av{k}} \int _{ -\infty}^0  \hat{b}\pare{k, y_2} e^{\av{k} y_2 } \textnormal{d} y_2
\  e^{- \av{k} x_2} \right. \\
\hspace{1cm}\left. 
+\int_0^{x_2} \frac{\hat{b}\pare{k, y_2}}{2\av{k}} {e^{\av{k}\pare{y_2 - x_2 }} } \textnormal{d} y_2, 
\right\rbrace e^{ik x_1} . 
\end{multlined}
\end{align*}
Indeed $ \av{u_-}< \infty $ since it is defined via  {the} negative exponential weights  {$e^{|k|x_2}$}, while we can reformulate $ u_+ $ as 
\begin{equation*}
u_+\pare{x_1, x_2}  =  - \frac{1}{\sqrt{2\pi}} \sum_{k} \set{
\frac{1}{2\av{k}} \int _{ -\infty}^{x_2}  \hat{b}\pare{k, y_2} e^{\av{k} y_2 } \textnormal{d} y_2}
\  e^{ik x_1- \av{k} x_2}.
\end{equation*}
 {As $y_2\leq x_2\leq0$ we have that $y_2-x_2\leq0$ and the definition of $u_+$ involves negative exponential weights. Similarly, when $k=0$, the compatibility condition for $b$ and $g$ ensures that $\hat{u}$ is well-defined concluding the proof.} 
\end{proof}

\section*{Acknowledgments}
The research of S.S. is supported by the Basque Government through the BERC 2018-2021 program and by Spanish Ministry of Economy and Competitiveness MINECO through BCAM Severo Ochoa excellence accreditation SEV-2017-0718 and through project MTM2017-82184-R funded by (AEI/FEDER, UE) and acronym "DESFLU".\\
We would like as well to express our gratitude to the unknown referee whose comments greatly improved the quality of the present work.

\begin{footnotesize}

\end{footnotesize}
\vspace{2cm}

\end{document}